\documentclass[10.5pt]{article}
\usepackage{amssymb}
\usepackage{amsthm}
\usepackage{amsmath, mathrsfs}
\usepackage{amsmath}
\usepackage{amssymb}
\usepackage{amsthm}
 \newtheorem{thm}{Theorem}[section]
 \newtheorem{cor}[thm]{Corollary}
 \newtheorem{lem}[thm]{Lemma}
 \newtheorem{prop}[thm]{Proposition}
 \theoremstyle{definition}
 \newtheorem{defn}[thm]{Definition}
 \theoremstyle{remark}
 \newtheorem{rem}[thm]{Remark}
 \newtheorem*{ex}{Example}
 \numberwithin{equation}{section}

\newcommand{\norm}[1]{\left\Vert#1\right\Vert}
\newcommand{\scal}[1]{\left<#1\right>}
\newcommand{\Hq}{\mathbb H}
\newcommand{\Sq}{\mathbb S}

\newcommand{\N}{\mathbb{N}}

\newcommand{\R}{\mathbb{R}}      
\newcommand{\Z}{\mathbb{Z}}
\newcommand{\C}{\mathbb{C}}



\newcommand{\CFH}{\mathcal{F}_{Slice}^\alpha(\Hq)}

\newcommand{\DS}{\mathcal{D}_{\alpha,S}}
\newcommand{\MS}{\mathcal{M}}
\newcommand{\Io}{\mathcal{I}}

\begin{document}

\title{The Cholewinski-Fock space in the Slice Hyperholomorphic Setting}

\author{
Kamal Diki \thanks{Marie Sklodowska-Curie fellow of the Istituto Nazionale di Alta Matematica}  \\
Dipartimento di Matematica\\
Politecnico di Milano\\
Via Bonardi 9\\
20133 Milano, Italy\\
kamal.diki@polimi.it}
\date{}
\maketitle

\begin{abstract}
Inspired from the Cholewinski approach see \cite{Cholewinski1984}, we investigate a family of Fock spaces in the quaternionic slice hyperholomorphic setting as well as some associated quaternionic linear operators. In a particular case, we reobtain the slice hyperholomorphic Fock space introduced and studied in \cite{AlpayColomboSabadini2014}.
\end{abstract}

\textbf{AMS 2010 Mathematics Subject Classification}: Primary 30G35, 30H20  Secondary 32A15, 44A15.

\textbf{Keywords and phrases}: Bessel functions; Quaternions; Slice Cholewinski-Fock space; Slice regular functions.

 \section{Introduction}
 \label{s1}
In 1961 Bargmann introduced in his original paper \cite{Bargmann1961} a Hilbert space of entire functions on which the creation and annihilation operators, namely $$\displaystyle M_zf(z):=zf(z)\quad \text{and} \quad Df(z):=\frac{d}{dz}f(z)$$ are closed, densely defined operators that are adjoints of each other and satisfy the classical commutation rule $$\left[D,M_z\right]=\mathcal{I}$$ where $\left[.,.\right]$ and $\mathcal{I}$ are respectively the commutator and the identity operator. In the literature, this space is known also as Fock or Segal-Bargmann space. It consists of entire functions that are square integrable on the complex plane with respect to the normalized Gaussian measure. It turns out that the standard Schr\"odinger Hilbert space on the real line is unitary equivalent to the Fock space via the so-called Segal-Bargmann transform. A few years later, in \cite{Cholewinski1984} Cholewinski extended this construction by studying a Hilbert space of even entire functions weighted by a modified Bessel function of the third kind sometimes also called Macdonald function. His construction generalized the original one of Bargmann so that in a particular case the weight is exactly the classical normalized Gaussian measure. He also proved in \cite{Cholewinski1984} some commutator relations between the Schr\"odinger radial kinetic energy operator and the operator $M_{z^2}$. Then, in 2002 based on the approach of Cholewinski, Sifi and Soltani considered and studied in \cite{SifiSoltani2002} a Hilbert space of entire functions that are not necessarily even with a weight involving the Macdonald function. \\ \\ In 2007 a new theory extending the classical one of holomorphic functions in complex analysis to the quaternionic setting has appeared in \cite{GentiliStruppa07}. It was also extended to the so-called slice monogenic setting for Clifford algebras valued functions, see \cite{CSS2009}. This new theory was extensively developed and found several applications in the last years in different mathematical fields, for example in Schur analysis and quaternionic operator theory, see \cite{ACS2016,ColomboSabadiniStruppa2011}. Moreover, this new theory may be very useful to develop the formalism for quaternionic quantum mechanics, see \cite{MTS2017}. Recently, the topic of Segal-Bargmann-Fock space and associated integral transforms in this new quaternionic and slice monogenic setting attracted the attention of several mathematicians and physicists from different points of view, see \cite{AlpayColomboSabadini2014,CD2017,CSS2017,DMNQ2018,DG2017,KMNQ2016,MNQ2017,PSS2017}. The purpose of this paper is to continue this exploration following the approach of Cholewinski, Sifi and Soltani in order to present a study of a quaternionic Hilbert space of slice entire functions weighted by a modified Bessel function that we shall call the quaternionic slice hyperholomorphic Cholewinski-Fock space or the slice Cholewinski-Fock space for short. This will allow us to extend some results obtained in \cite{AlpayColomboSabadini2014,DG2017} on the slice hyperholomorphic Fock space and the quaternionic analogue of the Segal-Bargmann transform. Moreover, we study some specific quaternionic operators associated to the slice Cholewinski-Fock space. In a particular case, we show that the slice derivative and the quaternionic multiplication are adjoints of each other and satisfy the classical commutation rule on the slice Fock space introduced in \cite{AlpayColomboSabadini2014}. \\ \\ The paper has the following structure: in the next section we briefly collect some basic facts about slice hyperholomorphic functions on quaternions and review some useful properties of the Macdonald function as it will be needed in this paper. In section 3, we define the slice Cholewinski-Fock space and we introduce an orthonormal basis. Moreover, we show that it is a quaternionic reproducing kernel Hilbert space. Section 4 is devoted to the study of a quaternionic unitary isomorphism between the slice Cholewinski-Fock space and a suitable quaternionic Hilbert space on the real line. This quaternionic isomorphism will be connected also to what we call the slice Dunkl transform. Then, section 5 deals with two right quaternionic linear operators that are proved to be adjoint of each other and satisfy a specific commutation rule on the slice Cholewinski-Fock space. Finally, the last section explains how the results obtained in this paper in the slice quaternionic setting could be extended in a similar way to the slice monogenic setting with Clifford algebras valued functions.

\section{Preliminaries}
 \subsection{Quaternions and slice regular functions}
For more details about the theory of slice regular functions and its different generalizations and applications one can see for example \cite{ColomboSabadiniStruppa2016,ColomboSabadiniStruppa2011,GentiliStruppa07,
GentiliStoppatoStruppa2013}. \\ \\
The non-commutative field of quaternions is defined to be
$$\Hq=\lbrace{q=x_0+x_1i+x_2j+x_3k\quad ; x_0,x_1,x_2,x_3\in\R}\rbrace$$ where the imaginary units satisfy the Hamiltonian multiplication rules $$i^2=j^2=k^2=-1\quad \text{and}\quad ij=-ji=k, jk=-kj=i, ki=-ik=j.$$
On $\Hq$ the conjugate and the modulus of $q$ are defined respectively by
$$\overline{q}=Re(q)-Im(q) \quad \text{where} \quad Re(q)=x_0, \quad Im(q)=x_1i+x_2j+x_3k$$
and $$\vert{q}\vert=\sqrt{q\overline{q}}=\sqrt{x_0^2+x_1^2+x_2^2+x_3^2}.$$
Note that the quaternionic conjugation satisfy the property $\overline{ pq }= \overline{q}\, \overline{p}$ for any $p,q\in \Hq$.
Moreover, the unit sphere $$S^2=\lbrace{q=x_1i+x_2j+x_3k;\text{ } x_1^2+x_2^2+x_3^2=1}\rbrace$$ can be identified with the set of all  imaginary units given by $$\mathbb{S}=\lbrace{q\in{\Hq};q^2=-1}\rbrace.$$
 It can be shown that any pure imaginary quaternion $q\in \Hq\setminus \R$ can be written in a unique way as $q=x+I y$ for some real numbers $x$ and $y>0$, and imaginary unit $I\in \mathbb{S}$.
Then, for every given $I\in{\mathbb{S}}$, the slice $\C_I$ is defined to be $\mathbb{R}+\mathbb{R}I$ and it is isomorphic to the complex plane $\C$ so that it can be considered as a complex plane in $\Hq$ passing through $0$, $1$ and $I$. Then, the union of all the slices of $\Hq$ is the whole space of quaternions $\Hq=\underset{I\in{\mathbb{S}}}{\cup}\C_I.$
\\ In \cite{GentiliStruppa07} the authors proposed a new definition to extend the classical theory of holomorphic functions in complex analysis to the quaternionic setting. This leads to the new theory of slice hyperholomorphic or slice regular functions on quaternions :
\begin{defn}
A real differentiable function $f: \Omega \longrightarrow \Hq$, on a given domain $\Omega\subset \Hq$, is said to be a slice (left) regular function if, for every $I\in \Sq$, the restriction $f_I$ to the slice $\C_{I}$, with variable $q=x+Iy$, is holomorphic on $\Omega_I := \Omega \cap \C_I$, that is it has continuous partial derivatives with respect to $x$ and $y$ and the function
$\overline{\partial_I} f : \Omega_I \longrightarrow \Hq$ defined by
$$
\overline{\partial_I} f(x+Iy):=
\dfrac{1}{2}\left(\frac{\partial }{\partial x}+I\frac{\partial }{\partial y}\right)f_I(x+yI)
$$
vanishes identically on $\Omega_I$. \\ The slice derivative $\partial_S f$ of $f$ is defined by :
\begin{equation*}
\partial_S(f)(q):=
\left\{
\begin{array}{rl}
\partial_I(f)(q)& \text{if } q=x+yI, y\neq 0\\
\displaystyle\frac{\partial}{\partial{x}}(f)(x) & \text{if } q=x \text{ is real}.
\end{array}
\right.
\end{equation*}
In addition to that we have :
\begin{enumerate}
\item A slice regular function on a domain $\Omega$ is said to be quaternionic intrinsic if $f(\Omega_I)\subset\C_I$ for any $I\in\mathbb{S}$.
\item  A function which is slice regular on the whole space of quaternions $\Hq$ is said to be entire slice regular.
\end{enumerate}
\end{defn}
We will refer to left slice regular functions as slice regular functions or simply regular functions for short and denote their space by $\mathcal{SR}(\Omega)$. It turns out that $\mathcal{S}\mathcal{R}(\Omega)$ is a right vector space over the noncommutative field $\Hq$.
\begin{rem}
The multiplication and composition of slice regular functions are not slice regular, in general. Moreover, the slice derivative does not satisfy the Leibniz rule with respect to the point wise multiplication. However, if $f$ and $g$ are slice regular functions such that $f$ is quaternionic intrinsic then the pointwise multiplication $h=fg$ is slice regular, see for example \cite{ColomboSabadiniStruppa2016}.
\end{rem}

 According to the last definition, the basic polynomials with quaternionic coefficients on the right are slice regular. Moreover, for any power series $\displaystyle\sum_n q^na_n$, there exists $0\leq R \leq \infty$, called the radius of convergence such that the power series defines a slice regular function on $B(0,R):= \{q\in \Hq; \, |q|<R\}$. The space of slice regular functions is endowed with the natural topology of uniform convergence on compact sets. Characterization of slice regular functions on a ball $B = B(0,R)$ centered at the origin is given by

\begin{thm}[Series expansion]
A given $\Hq$-valued function $f$ is slice regular on $B(0,R)\subset \Hq$ if and only if it has a series expansion of the form:
$$f(q)=\sum_{n=0}^{+\infty} q^n\frac{1}{n!}\partial^{(n)}_S(f)(0)$$
converging on $B(0,R)=\{q\in\Hq;\mid q\mid<R\}$.
\end{thm}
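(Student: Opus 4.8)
The plan is to prove the two implications separately, in each case transferring between slice regularity on $\Hq$ and ordinary one-variable holomorphy on the slices $\C_I$.

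For the ``if'' direction, suppose $f(q)=\sum_{n=0}^{\infty}q^n a_n$ with $a_n\in\Hq$ converges on $B(0,R)$. First I would note that each monomial $q\mapsto q^n$ is slice regular, so every partial sum $S_N=\sum_{n=0}^{N}q^n a_n$ is slice regular. The analytic input I need is that a quaternionic power series behaves exactly like a complex one on its ball of convergence: since $|q^n a_n|=|q|^n|a_n|$, the radius of convergence is controlled by $\limsup_n|a_n|^{1/n}$, the series converges absolutely and uniformly on compact subsets of $B(0,R)$, and it may be differentiated term by term there. Fixing $I\in\Sq$ and restricting to $\C_I$, the restriction $f_I$ is then a locally uniform limit of the holomorphic functions $(S_N)_I$ whose derivatives also converge locally uniformly; hence $f_I$ is of class $C^1$ and $\overline{\partial_I}f_I=\sum_{n}\overline{\partial_I}(q^n)\,a_n=0$, because each $q^n$ is slice regular. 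As $I$ is arbitrary, $f$ is slice regular on $B(0,R)$.

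For the ``only if'' direction, assume $f$ is slice regular on $B(0,R)$ and fix $I\in\Sq$. I would invoke the splitting of $f_I$: choosing $J\in\Sq$ with $IJ=-JI$ gives $\Hq=\C_I\oplus\C_I J$, so on the disk $B_I(0,R):=B(0,R)\cap\C_I$ we may write $f_I=F+GJ$ with $F,G\colon B_I(0,R)\to\C_I$. Expanding $\overline{\partial_I}f_I=\overline{\partial_I}F+(\overline{\partial_I}G)J$ and using that $\C_I$ and $\C_I J$ are independent shows $\overline{\partial_I}F=\overline{\partial_I}G=0$, so $F$ and $G$ are holomorphic in the classical sense on the disk $B_I(0,R)$, under the identification $\C_I\cong\C$. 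The usual complex Taylor theorem then yields convergent expansions of $F$ and $G$ about $0$, which recombine to $f_I(q)=\sum_{n}q^n\,\partial_I^{(n)}f_I(0)/n!$ on $B_I(0,R)$, using that $\partial_I$ agrees with the complex derivative on a holomorphic restriction.

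It remains to identify the coefficients and to pass from slices to all of $\Hq$, and this is the step I expect to be the crux. The point is that $\partial_I^{(n)}f_I(0)$ does not depend on $I$: for a holomorphic restriction one has $\partial_I f_I=\partial_x f_I$, so iterating gives $\partial_I^{(n)}f_I(0)=\partial_x^{n}f_I(0)$, and this last quantity depends only on the values of $f$ along the real axis near $0$, which lie in every slice and hence are slice-independent. Consequently $\partial_I^{(n)}f_I(0)=\partial_S^{(n)}f(0)$ for every $I$, and the expansion on each slice reads $f_I(q)=\sum_{n}q^n\,\partial_S^{(n)}f(0)/n!$ with one and the same coefficient sequence $a_n=\partial_S^{(n)}f(0)/n!$. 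Since $\Hq=\bigcup_{I\in\Sq}\C_I$, this identity holds for all $q\in B(0,R)$. Finally, convergence on each disk $B_I(0,R)$ forces $\limsup_n|a_n|^{1/n}\le 1/R$, a condition on the moduli alone, so the quaternionic series converges on the whole ball $B(0,R)$. Apart from the slice-independence of the coefficients, every ingredient reduces either to the elementary bound $|q^n a_n|=|q|^n|a_n|$ or to the classical one-variable theory applied slice by slice.
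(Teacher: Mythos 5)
Your proof is correct, but note that the paper itself offers no proof of this statement: it is quoted in the Preliminaries as a known result from the cited literature (Gentili--Struppa and the books of Colombo--Sabadini--Struppa and Gentili--Stoppato--Struppa), so there is no internal argument to compare against. What you wrote is essentially the standard proof from that literature: the ``if'' direction by locally uniform convergence and term-by-term differentiation on each slice, and the ``only if'' direction by the Splitting Lemma plus the classical complex Taylor expansion, with the crux being that the Taylor coefficients are slice-independent because $\partial_I^{(n)}f_I(0)=\partial_x^n f_I(0)$ is determined by the restriction of $f$ to the real axis. This last observation is exactly the right one, and it lets you avoid invoking the Representation Formula or Extension Lemma to pass from one slice to all of $\Hq$, since every point of $B(0,R)$ lies on some slice and all slices carry the same coefficient sequence. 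Two small points you gloss over, neither fatal: to make sense of the iterated slice derivative $\partial_S^{(n)}f(0)$ you should remark that $\partial_S f$ is again slice regular (immediate from your splitting, since $(\partial_S f)_I=F'+G'J$), and in the ``if'' direction the definition of slice regularity also asks for real differentiability of $f$ on the four-dimensional ball, which follows by the same locally uniform convergence applied to the series of partial derivatives in the four real coordinates, using $\left|\partial q^n/\partial x_i\right|\leq n|q|^{n-1}$.
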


\begin{defn}
A domain $\Omega\subset \Hq$ is said to be a slice domain (or just $s$-domain) if  $\Omega\cap{\mathbb{R}}$ is nonempty and for all $I\in{\mathbb{S}}$, the set $\Omega_I:=\Omega\cap{\C_I}$ is a domain of the complex plane $\C_I$.
If moreover, for every $q=x+yI\in{\Omega}$, the whole sphere $x+y\mathbb{S}:=\lbrace{x+yJ; \, J\in{\mathbb{S}}}\rbrace$
is contained in $\Omega$, we say that  $\Omega$ is an axially symmetric slice domain.
\end{defn}

\begin{ex}
The whole space $\Hq$  and the Euclidean ball $B=B(0,R)$ of radius $R$ centered at the origin are axially symmetric slice domains.
\end{ex}
The following properties of slice regular functions are of particular interest and will be very useful for the next sections of this paper, see \cite{ColomboSabadiniStruppa2011,GentiliStoppatoStruppa2013}.

\begin{lem}[Splitting Lemma]\label{split} Let $f$ be a slice regular function on a domain $\Omega$. Then, for every $I$ and $J$ two perpendicular imaginary units there exist two holomorphic functions $F,G:\Omega_{I}\longrightarrow{\C_I}$ such that for all $z=x+yI\in{\Omega_I}$, we have
$$f_I(z)=F(z)+G(z)J,$$
 where $\Omega_I=\Omega\cap{\C_I}$ and $\C_I=\mathbb{R}+\mathbb{R}I.$
\end{lem}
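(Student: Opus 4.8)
The plan is to prove the Splitting Lemma by reducing the quaternionic regularity condition to a pair of classical Cauchy–Riemann equations on the slice $\C_I$. First I would fix two perpendicular imaginary units $I$ and $J$; since $I^2=J^2=-1$ and $IJ=-JI$, the quaternions $\{1,I,J,IJ\}$ form a real basis of $\Hq$. Setting $K:=IJ$, every quaternionic value $f_I(z)$ can be written uniquely as $f_I(z)=a(z)+b(z)I+c(z)J+d(z)K$ with real-valued components. The key observation is that $\C_I=\R+\R I$ is a commutative subfield isomorphic to $\C$, so grouping terms gives $f_I(z)=\bigl(a(z)+b(z)I\bigr)+\bigl(c(z)+d(z)I\bigr)J$, because $K=IJ$ lets us factor $J$ out on the right. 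This defines the two candidate functions $F:=a+bI$ and $G:=c+dI$, both taking values in $\C_I$.

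Next I would show that $F$ and $G$ are holomorphic on $\Omega_I$. By hypothesis $f$ is slice regular, so the restriction $f_I$ satisfies $\overline{\partial_I}f_I=\tfrac12\bigl(\partial_x+I\partial_y\bigr)f_I=0$ on $\Omega_I$. Substituting the decomposition $f_I=F+GJ$ and using that $I$ and $J$ commute with the real differential operators $\partial_x,\partial_y$, the equation splits: since $J$ is a fixed basis vector independent of $F$ and $G$ and lies in a direction linearly independent from $\C_I$, the vanishing of $\overline{\partial_I}f_I$ forces $\overline{\partial_I}F=0$ and $\overline{\partial_I}G=0$ separately. Each of these is precisely the Cauchy–Riemann condition on $\Omega_I$ under the identification $\C_I\cong\C$, so $F$ and $G$ are holomorphic as claimed.

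The main obstacle, and the step requiring genuine care, is justifying the clean separation in the previous paragraph. Since $\Hq$ is noncommutative, one cannot naively move $I$ and $J$ past each other; the argument hinges on the fact that $\partial_x$ and $\partial_y$ act on the real components only, so $\overline{\partial_I}(GJ)=(\overline{\partial_I}G)J$ with the $J$ carried along untouched on the right. I would verify this by writing out $\tfrac12(\partial_x+I\partial_y)(F+GJ)$ explicitly and collecting the terms lying in $\C_I$ versus those of the form $(\,\cdot\,)J$; because $\{1,I\}$ and $\{J,IJ\}$ span complementary real two-dimensional subspaces, the $\C_I$-part and the $\C_I J$-part must each vanish independently. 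This yields the two decoupled equations and completes the proof.
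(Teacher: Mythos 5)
Your proof is correct and complete. Note that the paper itself does not prove this lemma: it is stated as a known preliminary with a citation to the literature (Colombo--Sabadini--Struppa and Gentili--Stoppato--Struppa), and your argument is exactly the standard proof found there --- decompose the values of $f_I$ along the real basis $\{1,I,J,IJ\}$, group them as $F+GJ$ with $F,G$ valued in $\C_I$, and observe that $\overline{\partial_I}(GJ)=(\overline{\partial_I}G)J$ because differentiation commutes with right multiplication by the constant $J$, so that $\overline{\partial_I}f_I=\overline{\partial_I}F+(\overline{\partial_I}G)J$ and the vanishing splits along the direct sum $\Hq=\C_I\oplus\C_I J$. You correctly identified and justified the one point where noncommutativity could cause trouble, namely that the $J$ is carried along untouched on the right, so there is nothing to add.
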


\begin{thm}[Representation Formula]\label{repform}
Let $\Omega$ be an axially symmetric slice domain and $f\in{\mathcal{SR}(\Omega)}$. Then, for any $I,J\in{\mathbb{S}}$, we have the formula
$$
f(x+yJ)= \frac{1}{2}(1-JI)f_I(x+yI)+ \frac{1}{2}(1+JI)f_I(x-yI)
$$
for all $q=x+yJ\in{\Omega}$.
\end{thm}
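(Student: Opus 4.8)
The plan is to prove the identity by exhibiting the right-hand side as a slice regular function that coincides with $f$ on the reference slice $\C_I$, and then to invoke uniqueness. Denote by $\Psi$ the function defined by the right-hand side,
$$\Psi(x+yJ):=\frac{1}{2}(1-JI)f_I(x+yI)+\frac{1}{2}(1+JI)f_I(x-yI).$$
First I would check that $\Psi$ is well defined on all of $\Omega$: since $\Omega$ is an axially symmetric slice domain, the whole sphere $x+y\Sq$ lies in $\Omega$, so in particular both $x+yI$ and $x-yI$ belong to $\Omega_I=\Omega\cap\C_I$ and the restriction $f_I$ may be evaluated there. This is the only place the axial symmetry hypothesis enters.

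Next I would verify that $\Psi$ agrees with $f$ on $\C_I$. Setting $J=I$ and using $I^2=-1$, the coefficients collapse as $1-I^2=2$ and $1+I^2=0$, so $\Psi(x+yI)=f_I(x+yI)=f(x+yI)$; the choice $J=-I$ likewise returns $f(x-yI)$, and at real points ($y=0$) the two $JI$-terms cancel and $\Psi(x)=f(x)$. Hence $\Psi\equiv f$ on the whole slice $\C_I$.

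The computational heart of the argument is to show that $\Psi$ is itself slice regular on $\Omega$. Fixing an arbitrary $J\in\Sq$, I would restrict to $\C_J$ and compute $\overline{\partial_J}\Psi_J=\frac{1}{2}(\partial_x+J\partial_y)\Psi_J$. Writing $u(x,y)=f_I(x+yI)$ and $v(x,y)=f_I(x-yI)$, the holomorphy of $f_I$ on $\C_I$, namely $(\partial_x+I\partial_y)f_I=0$, translates via the chain rule into the Cauchy--Riemann type relations $\partial_x u=-I\,\partial_y u$ and $\partial_x v=+I\,\partial_y v$. Substituting these into $(\partial_x+J\partial_y)\Psi_J$ and using $I^2=J^2=-1$, the quaternionic coefficients $(1-JI)$ and $(1+JI)$ conspire so that the coefficient of $\partial_y u$ is $(1-JI)(-I)+J(1-JI)=0$ and the coefficient of $\partial_y v$ is $(1+JI)I+J(1+JI)=0$; therefore $\overline{\partial_J}\Psi_J=0$ and $\Psi\in\mathcal{SR}(\Omega)$. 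The care needed here is purely in the bookkeeping of the non-commutative products.

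Finally, since $f$ and $\Psi$ are both slice regular on the slice domain $\Omega$ and coincide on $\C_I$, which meets the real axis $\Omega\cap\R\neq\emptyset$, the identity principle for slice regular functions forces $f=\Psi$ throughout $\Omega$, giving the asserted formula. I expect the main obstacle to be making the regularity verification watertight against the non-commutativity, together with the appeal to the identity principle, which is precisely where the slice-domain hypothesis (so that the two functions agree on a set with real accumulation points) is needed. On a ball $B(0,R)$ one can bypass the identity principle entirely: the Series Expansion theorem reduces the claim to the elementary polynomial identity $(x+yJ)^n=\frac{1}{2}(1-JI)(x+yI)^n+\frac{1}{2}(1+JI)(x-yI)^n$, which follows by writing $x\pm yI=\rho(\cos\theta\pm I\sin\theta)$ and applying de Moivre's formula.
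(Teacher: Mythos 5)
Your proof is correct, but note that there is nothing in the paper to compare it against: Theorem \ref{repform} is stated as a known preliminary, with no proof, and is simply cited from \cite{ColomboSabadiniStruppa2011,GentiliStoppatoStruppa2013}. Your argument is essentially the standard one from those references. In fact, the computational heart of your proof --- verifying that
$$\Psi(x+yJ)=\tfrac{1}{2}(1-JI)f_I(x+yI)+\tfrac{1}{2}(1+JI)f_I(x-yI)$$
is slice regular via the Cauchy--Riemann relations for $u(x,y)=f_I(x+yI)$ and $v(x,y)=f_I(x-yI)$ --- is exactly the content of the paper's Extension Lemma (Lemma \ref{extensionLem}), since
$$\tfrac{1}{2}(1-JI)f_I(x+yI)+\tfrac{1}{2}(1+JI)f_I(x-yI)=\tfrac{1}{2}\left[f_I(x+yI)+f_I(x-yI)\right]+\tfrac{JI}{2}\left[f_I(x-yI)-f_I(x+yI)\right],$$
i.e. $\Psi=ext(f_I)$. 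So, granting that lemma, the whole theorem follows in one line: $f$ and $ext(f_I)$ are slice regular on $\Omega$ and agree on $\Omega_I$, hence coincide by the Identity Principle (equivalently, by the uniqueness clause of the Extension Lemma). Your coefficient computations $(1-JI)(-I)+J(1-JI)=0$ and $(1+JI)I+J(1+JI)=0$ are correct, as is the de Moivre reduction on a ball. One small point you should make explicit: the right-hand side must be well defined as a function of the point $q$, i.e. invariant under the replacement $(y,J)\mapsto(-y,-J)$; this holds because the two summands simply exchange under that substitution, and it is needed when you restrict $\Psi$ to a slice $\C_J$ and differentiate over both signs of $y$.
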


\begin{lem}[Extension Lemma]\label{extensionLem}
Let $\Omega_I$ be a domain in $\C_I$ symmetric with respect to the real axis and such that $\Omega_I\cap\R\neq \emptyset$. Let $h:\Omega_I\longrightarrow \Hq$ be an holomorphic function. Then, the function $ext(h)$ defined by
$$ext(h)(x+yJ):= \dfrac{1}{2}[h(x+yI)+h(x-yI)]+\frac{JI}{2}[h(x-yI)-h(x+yI)];  \quad J\in \mathbb{S},$$
extends $h$ to a regular function on $\overset{\sim}\Omega=\underset{x+yJ\in{\Omega}}\cup x+y\mathbb{S}$, the symmetric completion of $\Omega_I$.
Moreover, $ext(h)$ is the unique slice regular extension of $h$.
\end{lem}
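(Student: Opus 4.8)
The plan is to establish in turn the three assertions implicit in the statement: that $ext(h)$ is well defined on $\widetilde{\Omega}$, that it is slice regular and restricts to $h$ on $\Omega_I$, and that it is the unique slice regular function with this last property.

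First I would check well-definedness. The representation $x+yJ$ of a quaternion is ambiguous only on the real axis, where $y=0$ and $J$ is arbitrary, and under the replacement $(y,J)\mapsto(-y,-J)$, which names the same point. When $y=0$ the two brackets collapse to $h(x)$ and $0$, so $ext(h)(x)=h(x)$ independently of $J$; and a direct substitution shows the defining formula is invariant under $(y,J)\mapsto(-y,-J)$, since the sign change of $J$ is compensated by the swap of $h(x+yI)$ and $h(x-yI)$. Hence $ext(h)$ is a genuine function on $\widetilde{\Omega}$. Next, setting $J=I$ and using $I^{2}=-1$ collapses the formula to $ext(h)(x+yI)=h(x+yI)$, so $ext(h)$ indeed extends $h$; and since $h$ is holomorphic it is smooth, so $ext(h)$ has continuous partial derivatives, as required by the definition of slice regularity.

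The core step is to prove slice regularity, i.e.\ to fix an arbitrary $J\in\mathbb{S}$ and show $\overline{\partial_J}(ext(h))_J\equiv 0$ on the slice $\C_J$. Writing $a(x,y)=h(x+yI)$ and $b(x,y)=h(x-yI)$, I would differentiate $\phi:=(ext(h))_J=\tfrac{1}{2}(a+b)+\tfrac{JI}{2}(b-a)$ in $x$ and in $y$. The holomorphy of $h$ on $\Omega_I$ furnishes the Cauchy--Riemann relation $\partial_y h=I\,\partial_x h$, which lets me rewrite $a_y$ and $b_y$ in terms of $a_x$ and $b_x$ (with an extra sign for $b$ coming from the $-y$ in its argument). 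Substituting into $\overline{\partial_J}\phi=\tfrac{1}{2}(\phi_x+J\phi_y)$ and reducing with $I^{2}=J^{2}=-1$, the four resulting terms cancel in two pairs, giving $\overline{\partial_J}\phi=0$. As $J$ was arbitrary, $ext(h)$ is holomorphic on every slice, hence slice regular on $\widetilde{\Omega}$. (Alternatively one could split $h=F+GK$ by Lemma~\ref{split} and reduce to the scalar complex case, but the direct computation is cleaner.)

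Finally, uniqueness follows from the Representation Formula (Theorem~\ref{repform}). Since $\widetilde{\Omega}$ is an axially symmetric slice domain, any slice regular $g$ on $\widetilde{\Omega}$ with $g|_{\Omega_I}=h$ must satisfy $g(x+yJ)=\tfrac{1}{2}(1-JI)h(x+yI)+\tfrac{1}{2}(1+JI)h(x-yI)$, and expanding this expression reproduces exactly the defining formula for $ext(h)$; hence $g=ext(h)$. I expect the regularity computation to be the main obstacle, not because it is deep but because the bookkeeping is delicate: the factor $JI$ sits on the left, the relation $\partial_y h=I\,\partial_x h$ introduces left multiplication by $I$, and the units $I$ and $J$ do not commute. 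One must track the order of every quaternionic factor so that the cross terms genuinely cancel, and it is precisely this requirement that forces the asymmetric placement of $JI$ in the definition.
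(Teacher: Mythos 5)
First, a point of comparison: the paper itself does not prove this lemma. It is stated among the preliminaries and quoted from \cite{ColomboSabadiniStruppa2011,GentiliStoppatoStruppa2013}, so there is no in-paper argument to measure yours against; what you have written is essentially the standard proof from those references. Your argument is correct: the well-definedness check against the two ambiguities of the representation $q=x+yJ$ (the real axis, and $(y,J)\mapsto(-y,-J)$), the collapse to $h$ at $J=I$, and the Cauchy--Riemann computation all go through. In particular the cancellation you describe does work out: with $a_y=Ia_x$, $b_y=-Ib_x$ and $J(JI)=-I$, one finds $J\phi_y=\frac{JI}{2}(a_x-b_x)-\frac{1}{2}(a_x+b_x)=-\phi_x$, so $\overline{\partial_J}\phi=0$; and your insistence on the left placement of $I$ and $J$ is exactly right, since the paper's $\overline{\partial_I}$ multiplies by the imaginary unit on the left.

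Two refinements are worth making. For uniqueness, your appeal to the Representation Formula (Theorem \ref{repform}) is valid and has the merit of re-deriving the defining formula, but it presupposes that $\widetilde{\Omega}$ is an axially symmetric slice domain; this is true (axial symmetry holds by construction, $\widetilde{\Omega}$ meets $\R$ because $\Omega_I$ does, and each $\widetilde{\Omega}\cap\C_J$ is a homeomorphic copy of the connected open set $\Omega_I$), but it should be stated. Alternatively, the Identity Principle (Theorem \ref{IdentityPrin}), which the paper also records, gives uniqueness in one line: two slice regular extensions agree on the open set $\Omega_I\subset\C_I$, hence everywhere on $\widetilde{\Omega}$. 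Second, your sentence ``since $h$ is holomorphic it is smooth, so $ext(h)$ has continuous partial derivatives'' is too quick at real points of $\widetilde{\Omega}$, where the coordinates $(x,y,J)$ degenerate. The repair is the parity structure of the formula: $\frac{1}{2}[h(x+yI)+h(x-yI)]$ is even in $y$ while $I[h(x-yI)-h(x+yI)]$ is odd in $y$, so $ext(h)(q)$ is a real-analytic function of $\mathrm{Re}(q)$ and $|\mathrm{Im}(q)|^2$ plus $\mathrm{Im}(q)$ times such a function, hence smooth across the real axis. This is a one-line fix, not a flaw in the method.
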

\begin{thm}[Identity Principle] \label{IdentityPrin}
Let $f$ and $g$ be two slice regular functions on a slice
domain $\Omega$. If, for some $I\in\mathbb{S}$, $f$ and $g$ coincide on a subset of $\Omega_I$ having an accumulation point in $\Omega_I$, then $f=g$ on the whole domain $\Omega$.
\end{thm}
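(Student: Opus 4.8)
The plan is to reduce the statement to a vanishing assertion and then exploit the two structural tools already available: the Splitting Lemma (Lemma~\ref{split}), to descend to genuinely holomorphic functions of one complex variable, and the slice-domain hypothesis, to transport information across the real axis. Set $h := f - g$. Since $\mathcal{SR}(\Omega)$ is a right vector space over $\Hq$, the function $h$ is slice regular on $\Omega$, and the hypothesis becomes: $h$ vanishes on a subset $S \subset \Omega_I$ having an accumulation point in $\Omega_I$. It suffices to prove $h \equiv 0$ on $\Omega$.

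First I would fix a unit $J \in \mathbb{S}$ with $J \perp I$ and apply the Splitting Lemma on the slice $\C_I$, writing $h_I(z) = F(z) + G(z)J$ with $F,G : \Omega_I \longrightarrow \C_I$ holomorphic. Using the direct-sum decomposition $\Hq = \C_I \oplus \C_I J$, the vanishing of $h_I$ on $S$ forces $F$ and $G$ to vanish on $S$ separately. Since $\Omega_I$ is a domain of $\C_I \cong \C$ and $S$ has an accumulation point there, the classical identity theorem for holomorphic functions of one complex variable gives $F \equiv 0$ and $G \equiv 0$ on $\Omega_I$, hence $h_I \equiv 0$ on the whole slice $\Omega_I$.

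The key step is then to pass from this one slice to all of $\Omega$, and here the slice-domain hypothesis is essential; note in particular that the Representation Formula (Theorem~\ref{repform}) is not available, since $\Omega$ need not be axially symmetric. Because $\Omega$ is a slice domain, $\Omega \cap \R$ is a nonempty relatively open subset of $\R$, hence has accumulation points, and $\Omega \cap \R \subset \Omega_I$, so the previous step already yields $h \equiv 0$ on $\Omega \cap \R$. Now take an arbitrary $K \in \mathbb{S}$ and repeat the splitting argument on the slice $\C_K$: writing $h_K(z) = P(z) + Q(z)L$ with $L \perp K$ and $P,Q : \Omega_K \longrightarrow \C_K$ holomorphic, the vanishing of $h$ at real points (where $h_K$ agrees with $h$ on $\Omega_K \cap \R = \Omega \cap \R$) forces $P$ and $Q$ to vanish on $\Omega \cap \R$. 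Since $\Omega_K$ is a domain and $\Omega \cap \R$ accumulates in it, the one-variable identity theorem again yields $P \equiv Q \equiv 0$, i.e. $h_K \equiv 0$ on $\Omega_K$.

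Finally, since $K$ was arbitrary and $\Omega = \bigcup_{K \in \mathbb{S}} \Omega_K$, we conclude $h \equiv 0$ on $\Omega$, that is $f = g$. I expect the main obstacle to be this propagation step: the coincidence set lives in a single slice $\C_I$, but the identity principle on one complex plane says nothing directly about other slices. The whole point is that the real axis, common to every slice and on which $h$ has been shown to vanish on a set with accumulation points, acts as a bridge that reactivates the one-variable identity theorem on each slice $\C_K$. One should also take care to record why $\Omega \cap \R$ is genuinely nonempty and relatively open --- this is precisely the definition of a slice domain --- since without a real accumulation point the argument on a generic slice $\C_K$ would break down.
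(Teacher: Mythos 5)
Your proof is correct: the paper states this Identity Principle without proof (citing \cite{ColomboSabadiniStruppa2011,GentiliStoppatoStruppa2013}), and your argument is precisely the standard one found there --- split $h=f-g$ on the slice $\C_I$ via Lemma~\ref{split}, apply the one-variable identity theorem to get $h\equiv 0$ on $\Omega_I$, then use the nonempty open set $\Omega\cap\R$ (guaranteed by the slice-domain hypothesis) as the accumulation set that restarts the same argument on every other slice $\C_K$. Your remarks that the Representation Formula is unavailable here (no axial symmetry is assumed) and that the real axis is the only bridge between slices are exactly the right points of care.
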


\subsection{Some properties of Bessel and modified Bessel functions}
For more details about the subject of Bessel functions and related topics we refer the reader to \cite{Erdelyi1953,Lebedev1972}. \\ \\
To any complex number $\nu$ is associated the so-called Bessel's differential equation \begin{equation}
x^2\frac{d^2}{dx^2}y+x\frac{d}{dx}y+(x^2-\nu^2)y=0
\end{equation}
Using the Frobeinus method, a solution of the last equation is given by the Bessel's function of the first kind, namely
$$J_\nu(x):=\displaystyle\left(\frac{x}{2}\right)^\nu\sum_{k=0}^\infty \frac{(-1)^k}{\Gamma(k+1)\Gamma(\nu+k+1)}\left(\frac{x}{2}\right)^{2k}.$$

The second linear independent solution of the Bessel's equation is the Bessel function of the second kind $Y_\nu$ which is defined by $$Y_\nu(x)=\frac{\cos(\nu\pi)J_{\nu}(x)-J_{-\nu}(x)}{\sin(\nu\pi)}\quad \text{if} \quad \nu \notin \Z \quad $$
and $$Y_n(x)=\underset{\nu\rightarrow n}\lim Y_\nu(x) \quad \text{if} \quad \nu=n\in\Z.$$
The same reasoning is adopted to construct a modified Bessel function of the third kind sometimes called also the Macdonald's function and denoted by $K_\nu(x)$. To this end, we consider the modified Bessel's equation given by \begin{equation}
x^2\frac{d^2}{dx^2}y+x\frac{d}{dx}y-(x^2+\nu^2)y=0
\end{equation}
Analogously, the modified Bessel function of the first kind is defined by $$I_\nu(x):=\displaystyle\left(\frac{x}{2}\right)^\nu\sum_{k=0}^\infty \frac{1}{\Gamma(k+1)\Gamma(\nu+k+1)}\left(\frac{x}{2}\right)^{2k}$$ and the Macdonald's function is defined by $$\displaystyle K_\nu(x)=\frac{\pi}{2}\frac{I_{-\nu}(x)-I_{\nu}(x)}{\sin(\nu\pi)}\quad\text{if} \quad \nu \notin \Z $$
and $$K_n(x)=\underset{\nu\rightarrow n}\lim K_\nu(x) \quad \text{if} \quad\nu=n\in\Z.$$
\newline The Macdonald's function is of a particular interest for our study since it will appear in the next section as a weight of the quaternionic Hilbert space of entire slice regular functions instead of the classical Gaussian measure. \\ \\ So, we summarize in the following Proposition some interesting properties of this function that will be useful in the sequel, see \cite{Erdelyi1953,Lebedev1972}. \\
\begin{prop} \label{Mac}
Let $x>0$ and $\delta,\nu\in \R$ such that $\delta+\nu >0$ and $\delta-\nu >0$.
Then, we have the following formulas
\begin{enumerate}
\item $\displaystyle K_\nu(x)=\int_0^\infty \exp(-x\cosh t)\cosh(\nu t)dt.$
\item $\displaystyle K_{\frac{1}{2}}(x)=K_{-\frac{1}{2}}(x)=\sqrt{\frac{\pi}{2x}}e^{-x}.$
\item $\displaystyle\int_0^\infty t^{\delta-1}K_\nu(t)dt=2^{\delta-2}\Gamma\left(\frac{\delta}{2}+\frac{\nu}{2}\right)\Gamma\left(\frac{\delta}{2}-\frac{\nu}{2}\right).$

\end{enumerate}
\end{prop}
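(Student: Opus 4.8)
The three formulas are standard facts about the Macdonald function, and the natural strategy is to establish the integral representation in item (1) first and then deduce items (2) and (3) from it; this is where essentially all the work lies, since the other two reduce to a Gaussian integral and to a Mellin/Beta computation respectively.

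For item (1), the plan is to set $u(x):=\int_0^\infty e^{-x\cosh t}\cosh(\nu t)\,dt$ and to show directly that $u$ solves the modified Bessel equation (2.2). Differentiating twice under the integral sign (justified because the integrand and its $x$-derivatives decay exponentially and uniformly for $x\ge x_0>0$) gives $u'(x)=-\int_0^\infty \cosh t\,e^{-x\cosh t}\cosh(\nu t)\,dt$ and $u''(x)=\int_0^\infty \cosh^2 t\,e^{-x\cosh t}\cosh(\nu t)\,dt$. Using $\cosh^2 t-1=\sinh^2 t$, the combination $x^2u''+xu'-x^2u$ equals $\int_0^\infty \cosh(\nu t)\,(x^2\sinh^2 t-x\cosh t)\,e^{-x\cosh t}\,dt$, and the remaining task is to identify this with $\nu^2 u$. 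I would obtain this by two integrations by parts: starting from $\int_0^\infty \cosh(\nu t)\,\frac{d}{dt}\!\left(\sinh t\,e^{-x\cosh t}\right)dt$, the boundary terms vanish (since $\sinh 0=0$ and $e^{-x\cosh t}\to 0$ at infinity), and a second integration by parts using $\frac{d}{dt}e^{-x\cosh t}=-x\sinh t\,e^{-x\cosh t}$ produces exactly the factor $\nu^2$. Hence $u$ solves (2.2).

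Having shown $u$ is a solution, I would identify it with $K_\nu$. The solution space of (2.2) is two-dimensional and is spanned by $I_\nu$ and $K_\nu$; since $I_\nu(x)$ grows like $e^{x}$ while $u(x)\to 0$ as $x\to\infty$, the $I_\nu$-component of $u$ must vanish, so $u=cK_\nu$ for some constant $c$. To fix $c=1$ I would compare leading asymptotics as $x\to+\infty$: Laplace's method applied to the integral (the dominant contribution coming from $t=0$, where $\cosh t\approx 1+t^2/2$) yields $u(x)\sim\sqrt{\pi/(2x)}\,e^{-x}$, which matches the known large-argument asymptotic of $K_\nu$; therefore $c=1$. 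I note for later use that $\cosh(\nu t)$ is even in $\nu$, so the representation immediately gives the symmetry $K_{-\nu}=K_\nu$.

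With item (1) in hand, item (2) follows by specializing $\nu=\tfrac12$ and substituting $v=\sinh(t/2)$, under which $\cosh t=1+2v^2$ and $\cosh(t/2)\,dt=2\,dv$; the integral collapses to $2e^{-x}\int_0^\infty e^{-2xv^2}\,dv=\sqrt{\pi/(2x)}\,e^{-x}$, and $K_{-1/2}=K_{1/2}$ by the symmetry just noted. For item (3), I would insert the representation of item (1), exchange the two integrations by Fubini (legitimate since the integrand is nonnegative and, under $\delta\pm\nu>0$, absolutely integrable), and evaluate the inner $t$-integral by the Gamma integral $\int_0^\infty t^{\delta-1}e^{-t\cosh s}\,dt=\Gamma(\delta)\cosh^{-\delta}s$. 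This reduces the claim to $\int_0^\infty \cosh(\nu s)\cosh^{-\delta}s\,ds=2^{\delta-2}\Gamma(\tfrac{\delta+\nu}{2})\Gamma(\tfrac{\delta-\nu}{2})/\Gamma(\delta)$, which I would prove by writing the integrand in exponential form and substituting $w=e^{-2s}$ to arrive at the Beta integral $\int_0^\infty w^{(\delta-\nu)/2-1}(1+w)^{-\delta}\,dw=B(\tfrac{\delta-\nu}{2},\tfrac{\delta+\nu}{2})$, whose convergence is guaranteed precisely by the hypotheses $\delta\pm\nu>0$. Multiplying back by $\Gamma(\delta)$ yields the stated value. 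The main obstacle is item (1): the integration-by-parts identity producing the $\nu^2$ term, and above all pinning down the normalization constant via the $x\to\infty$ asymptotics, after which items (2) and (3) are essentially mechanical.
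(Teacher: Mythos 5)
You should first be aware that the paper contains no proof of Proposition \ref{Mac} at all: it is stated as a summary of classical properties of the Macdonald function, with the reader referred to \cite{Erdelyi1953,Lebedev1972}. So your proposal is not an alternative to an argument in the paper but a self-contained substitute for a citation. In outline it is the classical textbook derivation, and most of it is correct: the double integration by parts does produce the missing $\nu^2 u$ term (the boundary terms vanish because $\sinh 0=\sinh(\nu\cdot 0)=0$ and $e^{-x\cosh t}$ decays doubly exponentially), differentiation under the integral sign is justified by uniform exponential domination on $[x_0,\infty)$, the substitution $v=\sinh(t/2)$ gives item (2) exactly as you compute, and Fubini--Tonelli plus the Gamma integral reduce item (3) to a Beta integral.

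There are two places to tighten, one of them substantive. The substantive one is your normalization $c=1$: you fix it by quoting ``the known large-argument asymptotic of $K_\nu$'', but with the paper's definition $K_\nu=\frac{\pi}{2}\frac{I_{-\nu}-I_\nu}{\sin(\nu\pi)}$ this is precisely the fact you cannot cheaply invoke. The $e^{-x}$ decay of $K_\nu$ lives in the exponentially small corrections to $I_{\pm\nu}(x)\sim e^x/\sqrt{2\pi x}$, and in the standard references that asymptotic is itself derived from an integral representation of the type you are proving, so as written the argument is circular (or at least rests on a black box of the same depth as the goal). A clean repair works at $x\to 0^+$ instead, where the defining series is directly usable: for non-integer $\nu>0$ write $u=AI_\nu+BI_{-\nu}$; the substitution $\tau=\frac{x}{2}e^{t}$ turns your integral into $u(x)=\frac{1}{2}\left(\frac{2}{x}\right)^{\nu}\int_0^\infty e^{-\tau-x^2/(4\tau)}\tau^{\nu-1}\,d\tau\sim\frac{1}{2}\Gamma(\nu)\left(\frac{2}{x}\right)^{\nu}$, which pins down $B=\frac{1}{2}\Gamma(\nu)\Gamma(1-\nu)=\frac{\pi}{2\sin(\nu\pi)}$ by the reflection formula; the evenness of $u$ in $\nu$ then forces $A=B(-\nu)=-\frac{\pi}{2\sin(\nu\pi)}$, hence $u=K_\nu$ exactly, and integer $\nu$ follows by continuity in $\nu$. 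This also makes your solution-space-at-infinity discussion unnecessary. The minor point concerns item (3): the substitution $w=e^{-2s}$ maps $(0,\infty)$ onto $(0,1)$, so you must first split $\cosh(\nu s)$ into its two exponentials and send one piece to $(1,\infty)$ via $w\mapsto 1/w$ (equivalently, symmetrize over $\mathbb{R}$ and substitute $w=e^{2s}$) before you reach $\int_0^\infty w^{(\delta-\nu)/2-1}(1+w)^{-\delta}\,dw=B\bigl(\frac{\delta-\nu}{2},\frac{\delta+\nu}{2}\bigr)$; as written, the substitution does not land on that Beta integral.
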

\section{The slice hyperholomorphic Cholewinski-Fock space}
Any quaternionic entire function may be written as $$f=f^e+f^o$$ $f^e$ and $f^o$ are respectively even and odd functions where $$f^e(q):=\displaystyle\frac{f(q)+f(-q)}{2} \quad \text{and} \quad f^o(q):=\frac{f(q)-f(-q)}{2}.$$
Then, thanks to the series expansion theorem for slice regular functions we have $$f(q)=\displaystyle\sum^\infty_{n=0} q^na_n \quad \text{with} \quad a_n\in\Hq$$ so that, $$f^e(q)=\displaystyle\sum_{n=0}^\infty q^{2n}a_{2n} \quad \text{and} \quad f^o(q)=\displaystyle\sum_{n=0}^\infty q^{2n+1}a_{2n+1}.$$
Now, let $\displaystyle \alpha\geq-\frac{1}{2}$ and $I$ be any imaginary unit in the sphere $\mathbb{S}$. Then, for $p=x+yI$ in the slice $\C_I$ we consider the following probability measure $$\displaystyle d\lambda_{\alpha,I}(p):=\frac{\vert{p}\vert^{2\alpha+2}}{\pi 2^{\alpha}\Gamma(\alpha+1)}K_\alpha(\vert{p}\vert^2)d\lambda_I(p)$$
where $K_\alpha$ is the Macdonald function and $d\lambda_I(p)$ is the usual Lebesgue measure on the slice $\C_I$.  In \cite{SifiSoltani2002} the complex generalized Fock space $\mathcal{F}^\alpha(\C)$ was defined to be the space consisting of complex entire functions $f:\C\longrightarrow\C$ satisfying: $$\displaystyle \int_{\C}\vert{f^e(z)}\vert^2 d \lambda_{\alpha}(z)+2(\alpha+1)\int_{\C}\vert{f^o(z)}\vert^2\vert{z}\vert^{-2}d\lambda_{\alpha+1}(z)<\infty.$$
Then,  we consider the following definition
\begin{defn}
A slice entire function $f:\Hq\longrightarrow \Hq$ is said to be in the slice Cholewinski-Fock space or the generalized slice Fock space if, for $I\in\Sq$ it satisfies the following condition
$$\displaystyle \int_{\C_I}\vert{f^e_I(p)}\vert^2 d \lambda_{\alpha,I}(p)+2(\alpha+1)\int_{\C_I}\vert{f^o_I(p)}\vert^2\vert{p}\vert^{-2}d\lambda_{\alpha+1,I}(p)<\infty.$$
The space containing all such functions will be denoted $\mathcal{F}_{Slice}^\alpha(\Hq)$.
\end{defn}
\begin{rem}
Notice that if $\displaystyle\alpha=-\frac{1}{2}$ then thanks to (2) in Proposition \ref{Mac} we can see that $\CFH$ is exactly the slice hyperholomorphic Fock space introduced and studied in \cite{AlpayColomboSabadini2014}. Indeed, for $\displaystyle\alpha=-\frac{1}{2}$ we get $$\displaystyle d\lambda_{\alpha,I}(p):=\frac{1}{2\pi}e^{-\vert p \vert^{2}}d\lambda_I(p).$$
In particular, in this case $f$ belongs to $\CFH$  if and only if it belongs to the classical slice hyperholomorphic Fock space.
\end{rem}

For $f,g\in \CFH$ we define the following inner product $$\displaystyle\scal{f,g}_{\CFH}:=\int_{\C_I}\overline{g^e_I(p)}f^e_I(p)d\lambda_{\alpha,I}(p)+2(\alpha+1)\int_{\C_I}\overline{g^o_I(p)}f^o_I(p)\vert{p}\vert^{-2}d\lambda_{\alpha+1,I}(p)$$

We shall see later that this definition is well posed since it does not depend on the choice of the imaginary unit $I$. We have :
\begin{prop}
$\CFH$ is a right quaternionic Hilbert space with respect to $\scal{.,.}_{\CFH}$.
\end{prop}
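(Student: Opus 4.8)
The plan is to check that $\scal{\cdot,\cdot}_{\CFH}$ is a genuine quaternionic inner product and then that $\CFH$ is complete for the induced norm $\norm{f}^2:=\scal{f,f}_{\CFH}$; throughout I fix an arbitrary $I\in\Sq$ and argue on the single slice $\C_I$, the independence of the construction on $I$ being postponed as announced. For the algebraic part, note that $\CFH$ is a subset of the right $\Hq$-vector space $\mathcal{SR}(\Hq)$; writing the two integrals as weighted $L^2$-seminorms of $f^e_I$ and $f^o_I$, Minkowski's inequality shows $\CFH$ is closed under addition, and it is trivially closed under right multiplication by quaternions, so it is a right $\Hq$-subspace. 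Since the measures are real-valued, the form is right-linear in the first entry, $\scal{f\lambda,g}_{\CFH}=\scal{f,g}_{\CFH}\lambda$, conjugate-linear in the second, and Hermitian symmetric, $\scal{g,f}_{\CFH}=\overline{\scal{f,g}_{\CFH}}$, because $\overline{ab}=\bar b\,\bar a$. Positivity is clear since $\alpha+1>0$; and if $\scal{f,f}_{\CFH}=0$ then $f^e_I$ and $f^o_I$ vanish almost everywhere on $\C_I$, hence identically (they are holomorphic components), so $f_I\equiv0$ and $f\equiv0$ by the Identity Principle (Theorem \ref{IdentityPrin}).

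Next I would record that $T:f\mapsto(f^e_I,f^o_I)$ is an isometry of $\CFH$ into the Hilbert space $\mathcal H:=L^2(\C_I,d\lambda_{\alpha,I})\oplus L^2\big(\C_I,|p|^{-2}d\lambda_{\alpha+1,I}\big)$, the second summand being rescaled by $2(\alpha+1)$. Completeness of $\CFH$ is then equivalent to $T(\CFH)$ being a closed subspace of $\mathcal H$, which is where the analytic input enters.

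The key step, and the main obstacle, is to show that point evaluations are continuous on $\CFH$. For this I would first check that the monomials $\{q^n\}_{n\ge0}$ form an orthogonal family: passing to polar coordinates $p=re^{I\theta}$, the angular integration annihilates all cross terms $\scal{q^m,q^n}_{\CFH}$ with $m\ne n$, while the even/odd splitting separates even from odd powers. Using the substitution $t=r^2$ together with (3) in Proposition \ref{Mac}, the diagonal norms $\norm{q^n}^2$ are finite and grow like a product of Gamma functions (for $n=2m$ one gets $\norm{q^{2m}}^2=2^{2m}\Gamma(m+1)\Gamma(m+\alpha+1)/\Gamma(\alpha+1)$). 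Writing $f(q)=\sum_{n\ge0}q^na_n$ for its Taylor expansion, orthogonality together with the Cauchy–Schwarz inequality yields $|a_n|\,\norm{q^n}\le\norm{f}$ for every $n$; hence $|f(q)|\le\norm{f}\sum_{n\ge0}|q|^n/\norm{q^n}$, and the super-geometric growth of $\norm{q^n}$ makes this last series converge uniformly on every ball $|q|\le R$. Thus $|f(q)|\le C_R\norm{f}$ for $|q|\le R$.

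With this bound the proof closes quickly. Given a Cauchy sequence $(f_n)$ in $\CFH$, the estimate $|f_n(q)-f_m(q)|\le C_R\norm{f_n-f_m}$ shows $(f_n)$ is uniformly Cauchy on compact sets, so it converges locally uniformly to an entire slice regular function $f$ (uniform limits of slice regular functions are slice regular). On the other hand $T(f_n)$ converges in $\mathcal H$ to some $(u,v)$, and extracting an almost everywhere convergent subsequence identifies $u=f^e_I$ and $v=f^o_I$; hence $T(f)=(u,v)\in\mathcal H$, so $f\in\CFH$ and $\norm{f_n-f}=\norm{T(f_n)-T(f)}_{\mathcal H}\to0$, proving $T(\CFH)$ closed and $\CFH$ complete. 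As an alternative to the evaluation-bound argument one may invoke the Splitting Lemma (Lemma \ref{split}): writing $f_I=F+GJ$ gives $\norm{f}^2=\norm{F}^2+\norm{G}^2$ with $F,G$ in the complex Cholewinski–Sifi–Soltani space $\mathcal F^\alpha(\C_I)$, so that $\CFH$ is isometric to $\mathcal F^\alpha(\C_I)\oplus\mathcal F^\alpha(\C_I)$ and inherits completeness from \cite{SifiSoltani2002}, the limit being reconstructed through the Extension Lemma (Lemma \ref{extensionLem}).
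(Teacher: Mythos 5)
Your proof is correct, but your main line of argument is genuinely different from the paper's. The paper's proof is completeness-only and very short: given a Cauchy sequence $(f_n)$ in $\CFH$, it applies the Splitting Lemma (Lemma \ref{split}) to write $f_{n,I}=F_n+G_nJ$, observes that $(F_n)$ and $(G_n)$ are Cauchy in the complex Cholewinski--Sifi--Soltani space $\mathcal F^\alpha(\C_I)$, imports completeness of that space from \cite{SifiSoltani2002} to get limits $F,G$, and reconstructs the limit as $f=ext(F+GJ)$ via the Extension Lemma (Lemma \ref{extensionLem}) --- exactly the ``alternative'' you sketch in your closing sentences, so your last paragraph essentially \emph{is} the paper's proof. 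Your primary route instead is self-contained: you verify the inner-product axioms (which the paper never does, and which is a worthwhile addition, e.g.\ definiteness via the Identity Principle), establish orthogonality of the monomials and the evaluation bound $|f(q)|\le C_R\norm{f}_{\CFH}$ directly, and deduce completeness from locally uniform convergence plus $L^2$ identification of the limit. What this buys is independence from the complex reference: you prove rather than cite the key analytic input, and as a by-product you anticipate results the paper develops separately later (your monomial computation is Lemma \ref{comp}, your evaluation bound is Proposition \ref{estimate2}), so nothing is wasted. What the paper's route buys is brevity and avoidance of two points you should still flag in your version: the interchange of summation and integration behind $|a_n|\,\norm{q^n}\le\norm{f}_{\CFH}$ needs the usual justification (uniform convergence on $\{|p|<R\}$, then $R\to\infty$, as in Proposition \ref{scal}), and the fact that locally uniform limits of entire slice regular functions are slice regular, while standard, deserves a citation since it is not proved in the paper.
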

\begin{proof}
Let $(f_n)$ be a Cauchy sequence in $\CFH$. Take $I,J\in\mathbb{S}$ such that $I\perp J$. Then, since $f_n$ are slice regular we can use the Splitting Lemma to write $$f_{n,I}:=F_n+G_nJ \quad \forall n\in\N$$ where $F_n$ and $G_n$ are holomorphic functions on the slice $\C_I$ belonging to the generalized complex Fock space $\mathcal{F}^\alpha(\C_I)$. It is easy to see that $(F_n)_n$ and $(G_n)_n$ are Cauchy sequences in $\mathcal{F}^\alpha(\C_I)$. Hence, there exists two functions $F$ and $G$ belonging to $\mathcal{F}^\alpha(\C_I)$ such that the sequences $(F_n)_n$ and $(G_n)_n$ are converging respectively to $F$ and $G$. Let $f_I=F+GJ$ and consider $f=ext(f_I)$ we have then $f\in \CFH$. Moreover, the sequence $(f_n)$ converges to $f$ with respect to the norm of $\CFH$. This ends the proof.
\end{proof}

For any $m,n\geq 0,$ we set $$\displaystyle E_{m,n}(\alpha):=\int_{\C_I}\overline{q^{2m}}q^{2n}d\lambda_{\alpha,I}(q)$$
and $$O_{m,n}(\alpha):=\int_{\C_I}\overline{q^{2m+1}}q^{2n+1}\vert{q}\vert^{-2}   \lambda_{\alpha+1,I}(q).$$

Then, the following formulas hold
\begin{lem} \label{comp} For all $m,n\geq 0,$ we have
\begin{enumerate}
\item[i)] $\displaystyle E_{m,n}(\alpha)=\delta_{m,n}2^{2n}n!\frac{\Gamma(\alpha+n+1)}{\Gamma(\alpha+1)}$.
\item [ii)] $\displaystyle O_{m,n}(\alpha)=E_{m,n}(\alpha+1)$.
\end{enumerate}
\end{lem}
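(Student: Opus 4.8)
The plan is to reduce both quantities to one-dimensional integrals against the Macdonald function and then invoke formula (3) of Proposition \ref{Mac}. Since $E_{m,n}(\alpha)$ and $O_{m,n}(\alpha)$ are defined as integrals over a single slice $\C_I\cong\C$, I would first pass to polar coordinates, writing $q=re^{I\theta}$ with $r>0$ and $\theta\in[0,2\pi)$, so that $|q|=r$ and $d\lambda_I(q)=r\,dr\,d\theta$. In these coordinates one has $\overline{q^{2m}}q^{2n}=r^{2m+2n}e^{2(n-m)I\theta}$, and the entire $\theta$-dependence sits in the exponential factor.

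The first step is the angular integration: $\int_0^{2\pi}e^{2(n-m)I\theta}\,d\theta$ equals $2\pi$ when $m=n$ and vanishes otherwise, because $2(n-m)$ is a nonzero integer. This immediately produces the Kronecker delta $\delta_{m,n}$ appearing in (i) and records the orthogonality of the monomials for distinct indices. The second step evaluates the surviving radial integral when $m=n$. Inserting the explicit density of $d\lambda_{\alpha,I}$ together with the value $2\pi$ of the angular integral turns $E_{n,n}(\alpha)$ into a constant multiple of $\int_0^\infty r^{4n+2\alpha+3}K_\alpha(r^2)\,dr$. The substitution $t=r^2$ then recasts this as $\tfrac12\int_0^\infty t^{2n+\alpha+1}K_\alpha(t)\,dt$, which is exactly of the form covered by formula (3) of Proposition \ref{Mac} with $\nu=\alpha$ and $\delta=2n+\alpha+2$.

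At this point I must check the admissibility conditions of that proposition, namely $\delta+\nu=2n+2\alpha+2>0$ (which uses $\alpha\ge-\tfrac12$) and $\delta-\nu=2n+2>0$; both clearly hold. The formula then yields $2^{2n+\alpha}\Gamma(n+\alpha+1)\Gamma(n+1)$, and dividing by the normalizing constant $2^\alpha\Gamma(\alpha+1)$ and using $\Gamma(n+1)=n!$ gives precisely $2^{2n}n!\,\Gamma(\alpha+n+1)/\Gamma(\alpha+1)$, establishing (i).

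Finally, part (ii) requires no new computation. The key observation is that in polar coordinates $\overline{q^{2m+1}}q^{2n+1}|q|^{-2}=r^{2m+2n}e^{2(n-m)I\theta}=\overline{q^{2m}}q^{2n}$, so the integrand defining $O_{m,n}(\alpha)$ against the measure $d\lambda_{\alpha+1,I}$ coincides term-by-term with the integrand defining $E_{m,n}(\alpha+1)$; the factor $|q|^{-2}$ is exactly what cancels the extra two powers of $r$ coming from the odd exponents. Hence $O_{m,n}(\alpha)=E_{m,n}(\alpha+1)$ by inspection. No genuine obstacle arises in this argument; the only points demanding care are the verification of the parameter constraints in Proposition \ref{Mac} and the bookkeeping of the powers of $2$ and the Jacobian factors produced by the change of variables $t=r^2$.
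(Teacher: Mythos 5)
Your proposal is correct and follows essentially the same route as the paper: polar coordinates $q=re^{I\theta}$ on the slice, the angular integral producing $\delta_{m,n}$, the substitution $t=r^2$, and then formula (3) of Proposition \ref{Mac} with $\nu=\alpha$, $\delta=2n+\alpha+2$, while part (ii) is handled by observing that $\vert q\vert^{-2}$ cancels the extra powers coming from the odd exponents (the paper simply calls this ``obvious from the definition''). Your explicit verification of the admissibility conditions $\delta\pm\nu>0$ is a small but welcome addition that the paper omits.
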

\begin{proof}
\begin{enumerate}
\item[i)] We write $q=re^{I\theta}$ using the polar coordinates.
This leads to \begin{align*}
E_{m,n}(\alpha)
 &=\displaystyle \frac{1}{2^\alpha\pi\Gamma(\alpha+1)}\int_0^\infty \int_0^{2\pi}e^{2(n-m)\theta I}r^{2(m+n+\alpha+1)}K_\alpha(r^2)rdrd\theta
 \\& =\frac{1}{2^\alpha\pi\Gamma(\alpha+1)}\int_0^{2\pi}e^{2(n-m)\theta I}d\theta \int_0^\infty r^{2(m+n+\alpha+1)}K_\alpha(r^2)rdr
 \\& = \frac{2\delta_{m,n}}{2^\alpha\Gamma(\alpha+1)}\int_0^\infty r^{2(2n+\alpha+1)}K_{\alpha}(r^2)rdr.
 \end{align*}
 Making use of the change of variable $t=r^2,$ we obtain $$E_{m,n}(\alpha)=\displaystyle\frac{\delta_{m,n}}{2^\alpha\Gamma(\alpha+1)}\int_0^\infty t^{2n+\alpha+1}K_\alpha(t)dt$$
 Then, the proof of i) ends thanks to the property 3 in Proposition \ref{Mac} by taking $\delta=2n+\alpha+2.$
 \item[ii)] This is obvious from the definition of $O_{m,n}(\alpha)$.
\end{enumerate}
\end{proof}

Thanks to the last lemma we have the two following propositions :
\begin{prop}\label{scal}
Let $f(q)=\displaystyle\sum_{n=0}^\infty q^na_n$ and $g(q)=\displaystyle\sum_{n=0}^\infty q^nb_n$ be two slice regular functions belonging to $\CFH$. Then, we have $$\scal{f,g}_{\CFH}=\displaystyle \sum_{n=0}^\infty \overline{b_n}a_n\beta_n(\alpha)$$
where $$\displaystyle\beta_n(\alpha):=2^n\left[\frac{n}{2}\right]!\frac{\displaystyle\Gamma\left(\left[\frac{n+1}{2}\right]+\alpha+1\right)}{\Gamma(\alpha+1)}.$$
Here the symbol $\left[.\right]$ stands for the integer part.
\end{prop}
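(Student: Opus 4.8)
The plan is to reduce the whole computation to the moment integrals already evaluated in Lemma \ref{comp}. First I would decompose both functions into even and odd parts on the slice $\C_I$, writing $f^e_I(p)=\sum_{n\ge 0}p^{2n}a_{2n}$, $f^o_I(p)=\sum_{n\ge 0}p^{2n+1}a_{2n+1}$ and likewise for $g$ with coefficients $b_n$; these are exactly the series furnished by the series expansion theorem. Substituting into the two integrals that define $\scal{f,g}_{\CFH}$ and using $\overline{p^{2m}b_{2m}}=\overline{b_{2m}}\,\overline{p^{2m}}$, the even integral becomes a double sum with generic term $\overline{b_{2m}}\,\overline{p^{2m}}p^{2n}a_{2n}$, and the odd integral a double sum with generic term $\overline{b_{2m+1}}\,\overline{p^{2m+1}}p^{2n+1}|p|^{-2}a_{2n+1}$.

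The key structural observation is that the measures $d\lambda_{\alpha,I}$ and $d\lambda_{\alpha+1,I}$ are real valued, so integration is $\Hq$-linear on both sides over $\R$; hence each monomial integral factors the quaternionic coefficients out on the left and right and leaves precisely $E_{m,n}(\alpha)$ in the even case and $O_{m,n}(\alpha)$ in the odd case. By Lemma \ref{comp} these quantities are real and proportional to $\delta_{m,n}$, so they commute freely past $\overline{b}$ and $a$ and collapse both double sums to their diagonals, giving
\begin{equation*}
\scal{f,g}_{\CFH}=\sum_{n\ge 0}\overline{b_{2n}}\,a_{2n}\,E_{n,n}(\alpha)+2(\alpha+1)\sum_{n\ge 0}\overline{b_{2n+1}}\,a_{2n+1}\,O_{n,n}(\alpha).
\end{equation*}

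It then remains only to match the diagonal coefficients with $\beta_n(\alpha)$. For even indices, $E_{n,n}(\alpha)=2^{2n}n!\,\Gamma(\alpha+n+1)/\Gamma(\alpha+1)$ is exactly $\beta_{2n}(\alpha)$, since $[2n/2]=[(2n+1)/2]=n$. For odd indices I would use $O_{n,n}(\alpha)=E_{n,n}(\alpha+1)$ together with the functional equation $\Gamma(\alpha+2)=(\alpha+1)\Gamma(\alpha+1)$ to get $2(\alpha+1)O_{n,n}(\alpha)=2^{2n+1}n!\,\Gamma(\alpha+n+2)/\Gamma(\alpha+1)=\beta_{2n+1}(\alpha)$, using $[(2n+1)/2]=n$ and $[(2n+2)/2]=n+1$. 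Reindexing the two sums into one over all $n$ yields the claimed identity; as a by-product the right-hand side is manifestly independent of $I$, which vindicates the preceding remark that $\scal{\cdot,\cdot}_{\CFH}$ is well defined.

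The only genuinely delicate point, which I expect to be the main obstacle, is justifying the interchange of the infinite summations with the integrals rather than treating it as purely formal. I would deal with it without any direct Fubini estimate on the double series: using Lemma \ref{comp} one first checks that the monomials $\{q^n\}_{n\ge 0}$ form an orthogonal family for $\scal{\cdot,\cdot}_{\CFH}$ with $\norm{q^n}^2=\beta_n(\alpha)$. Since $f,g\in\CFH$, the partial sums of their expansions converge to $f$ and $g$ in the Fock norm, so continuity of the inner product allows passage to the limit term by term, and orthogonality annihilates every off-diagonal contribution, leaving exactly the diagonal sum above.
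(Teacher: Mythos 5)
Your computational skeleton is sound and coincides with the paper's: split into even and odd parts, expand both functions, use two-sided $\Hq$-linearity of integration against the real-valued measures to reduce everything to the moment integrals $E_{m,n}(\alpha)$ and $O_{m,n}(\alpha)$ of Lemma \ref{comp}, collapse to the diagonal, and match coefficients via $E_{n,n}(\alpha)=\beta_{2n}(\alpha)$ and $2(\alpha+1)O_{n,n}(\alpha)=\beta_{2n+1}(\alpha)$ (your use of $\Gamma(\alpha+2)=(\alpha+1)\Gamma(\alpha+1)$ here is correct, as is the remark on independence of $I$). The problem is your last paragraph, i.e.\ exactly the point you flag as the main obstacle: the argument you propose is circular. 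By orthogonality of the monomials, the assertion that the partial sums $S_N(q)=\sum_{n=0}^{N}q^na_n$ converge to $f$ in the norm of $\CFH$ is equivalent to the finiteness $\sum_{n=0}^{\infty}\vert a_n\vert^2\beta_n(\alpha)<\infty$, and that finiteness is precisely what Proposition \ref{scal} (applied with $f=g$) establishes; it is stated as the corollary immediately following it in the paper. Membership $f\in\CFH$ only says that the two defining integrals are finite and gives no a priori control on the Taylor coefficients. Nor can abstract Hilbert space theory supply the missing step: Bessel's inequality controls the Fourier coefficients $\scal{f,\phi_n^\alpha}_{\CFH}$, but identifying these with $a_n\sqrt{\beta_n(\alpha)}$ requires computing $\scal{f,q^n}_{\CFH}$, which is again a term-by-term integration of the series of $f$ --- the very interchange you set out to avoid.

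The repair is the direct truncation argument that the paper uses (tersely): integrate over the balls $\lbrace \vert p\vert<R\rbrace$, where the power series of $f_I$ and $g_I$ converge absolutely and uniformly, so that term-by-term integration is legitimate for each fixed $R$. The angular integral $\int_0^{2\pi}e^{2(n-m)\theta I}d\theta=2\pi\delta_{m,n}$ already produces the Kronecker delta for every finite $R$, so the truncated double sums are diagonal with non-negative radial factors that increase with $R$. Taking $f=g$ first, monotone convergence in $R$ (on both sides of the identity) yields $\norm{f}^2_{\CFH}=\sum_{n=0}^{\infty}\vert a_n\vert^2\beta_n(\alpha)$, and in particular the finiteness you need. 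Once this Parseval identity for norms is in hand, the partial sums do converge in norm, and your continuity-of-the-inner-product argument becomes valid and finishes the bilinear case; so your closing strategy is usable, but only after the direct estimate it was meant to replace has been carried out.
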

\begin{proof}
We have $$\displaystyle\scal{f,g}_{\CFH}:=\int_{\C_I}\overline{g^e_I(p)}f^e_I(p)d\lambda_{\alpha,I}(p)+2(\alpha+1)\int_{\C_I}\overline{g_o^I(p)}f^o_I(p)\vert{p}\vert^{-2}d\lambda_{\alpha+1,I}(p)$$
Then, we set $$A:= \int_{\C_I}\overline{g^e_I(p)}f^e_I(p)d\lambda_{\alpha,I}(p)\quad \text{and} \quad B:=2(\alpha+1)\int_{\C_I}\overline{g^o_I(p)}f^o_I(p)\vert{p}\vert^{-2}d\lambda_{\alpha+1,I}(p)$$
Notice that
$$f^e(q)=\displaystyle\sum_{n=0}^\infty q^{2n}a_{2n} \quad \text{and} \quad g^e(q)=\sum_{m=0}^\infty q^{2m}b_{2m}.$$
Thus, \begin{align*}
A
 &=\displaystyle \lim_{R\rightarrow\infty} \int_{\lbrace{\vert{p}\vert<R}\rbrace}\left(\sum_{m=0}^\infty \overline{b_{2m}}\overline{p^{2m}}\right)\left(\sum_{n=0}^\infty p^{2n}a_{2n}\right)d\lambda_{\alpha,I}(p)
 \\& = \sum_{m,n=0}^\infty\overline{b_{2m}}E_{m,n}(\alpha)a_{2n}.
 \\&
 \end{align*}
Hence, making use of the Lemma \ref{comp} we get $$A=\displaystyle\sum_{k=0}^\infty\overline{b_{2k}}a_{2k}\beta_{2k}(\alpha).$$

 Similarly, by writing $$f^o(q)=\displaystyle\sum_{n=0}^\infty q^{2n+1}a_{2n+1} \quad \text{and} \quad g^o(q)=\sum_{m=0}^\infty q^{2m+1}b_{2m+1}$$
 we obtain $$ B=\displaystyle\sum_{k=0}^\infty\overline{b_{2k+1}}a_{2k+1}\beta_{2k+1}(\alpha).$$

 This leads to $$\scal{f,g}_{\CFH}=\displaystyle \sum_{n=0}^\infty \overline{b_n}a_n\beta_n(\alpha).$$
\end{proof}
\begin{rem}
Proposition \ref{scal} shows that the scalar product is independent of the choice of the imaginary unit $I$.
\end{rem}
\begin{prop}
For any $n\in\N$, we consider the functions $$\phi_n^\alpha(q)=\displaystyle\frac{q^n}{\sqrt{\beta_n(\alpha)}}.$$ Then, $\lbrace{\phi_n^\alpha}\rbrace_n$ form an orthonormal basis of $\CFH$.
\end{prop}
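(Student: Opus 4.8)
The plan is to read everything off Proposition \ref{scal}, which already expresses the inner product of two members of $\CFH$ as a weighted sum of their Taylor coefficients. First I would settle orthonormality. The function $\phi_n^\alpha$ has Taylor coefficients $a_k=\delta_{k,n}/\sqrt{\beta_n(\alpha)}$, so feeding the pair $\phi_n^\alpha,\phi_m^\alpha$ into Proposition \ref{scal} and observing that the only surviving term of $\sum_k\overline{b_k}a_k\beta_k(\alpha)$ requires $k=n=m$, one obtains $\scal{\phi_n^\alpha,\phi_m^\alpha}_{\CFH}=\delta_{n,m}$. In particular $\norm{\phi_n^\alpha}_{\CFH}=1$. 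Here it is worth recording that for $\alpha\geq-\tfrac12$ the quantity $\beta_n(\alpha)=2^n\left[\tfrac{n}{2}\right]!\,\Gamma\!\left(\left[\tfrac{n+1}{2}\right]+\alpha+1\right)/\Gamma(\alpha+1)$ is strictly positive and finite, since all the arguments of the Gamma factors are positive; thus $\sqrt{\beta_n(\alpha)}$ makes sense and each $\phi_n^\alpha$ genuinely lies in $\CFH$.

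The substantive step is completeness. Given $f\in\CFH$, the series expansion theorem writes $f(q)=\sum_{n\geq 0}q^na_n$ with $a_n=\tfrac{1}{n!}\partial^{(n)}_S(f)(0)$, while Proposition \ref{scal} supplies the Parseval-type identity $\norm{f}_{\CFH}^2=\sum_{n\geq 0}|a_n|^2\beta_n(\alpha)<\infty$. I would then set $c_n:=a_n\sqrt{\beta_n(\alpha)}\in\Hq$, so that the partial sums $S_N:=\sum_{n=0}^N\phi_n^\alpha c_n=\sum_{n=0}^N q^na_n$ are finite right-$\Hq$-linear combinations of the $\phi_n^\alpha$ lying in $\CFH$. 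Since $f-S_N$ is again an element of $\CFH$ whose Taylor coefficients equal $a_n$ for $n>N$ and vanish otherwise, the same identity gives $\norm{f-S_N}_{\CFH}^2=\sum_{n>N}|a_n|^2\beta_n(\alpha)$, the tail of a convergent series, which tends to $0$ as $N\to\infty$. Hence $f=\sum_{n\geq 0}\phi_n^\alpha c_n$ in the norm of $\CFH$, so the orthonormal system $\{\phi_n^\alpha\}_n$ is complete, i.e.\ an orthonormal basis. (Equivalently one may argue that the orthogonal complement is trivial: if $\scal{f,\phi_n^\alpha}_{\CFH}=0$ for all $n$, then Proposition \ref{scal} forces $a_n\sqrt{\beta_n(\alpha)}=0$, hence every $a_n=0$ and $f=0$.)

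The only point that requires genuine care is the passage from convergence of the Taylor series, a priori only uniform on compact subsets via the series expansion theorem, to convergence in the Hilbert norm. This is precisely what the identification $\norm{\cdot}_{\CFH}^2=\sum_n|a_n|^2\beta_n(\alpha)$ furnishes, reducing the whole statement to the elementary fact that the standard unit vectors form an orthonormal basis of a weighted $\ell^2$ space. Once Proposition \ref{scal} is in hand, no obstacle beyond this bookkeeping remains.
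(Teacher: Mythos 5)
Your proposal is correct, and it rests on the same key ingredient as the paper, namely the Parseval-type identity of Proposition \ref{scal}; the difference lies in how completeness is concluded. The paper's proof is exactly your parenthetical remark: take $f\in\CFH$ with $\scal{f,\phi_k^\alpha}_{\CFH}=0$ for all $k$, expand $f=\sum_{n}\phi_n^\alpha c_n$ via the series expansion theorem, read off $\scal{f,\phi_k^\alpha}_{\CFH}=c_k$ from Proposition \ref{scal}, and conclude $f=0$; that is, it verifies that the orthogonal complement of the system is trivial and delegates the rest to general Hilbert space theory. Your main line of argument instead shows directly that the partial sums $S_N$ converge to $f$ in the $\CFH$-norm, using the tail estimate $\norm{f-S_N}_{\CFH}^2=\sum_{n>N}\vert a_n\vert^2\beta_n(\alpha)\to 0$, so that finite right $\Hq$-linear combinations of the $\phi_n^\alpha$ are dense. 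Both are standard equivalent characterizations of an orthonormal basis; your version has the small advantage of exhibiting the expansion $f=\sum_n\phi_n^\alpha c_n$ as a norm-convergent series rather than merely a locally uniform one (the point you rightly flag as the only delicate passage), while the paper's version is shorter. For orthonormality the paper invokes Lemma \ref{comp} directly rather than Proposition \ref{scal}, but since the latter is derived from the former this difference is immaterial.
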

\begin{proof}
Lemma \ref{comp} allows us to easily check that $$\scal{\phi_m^\alpha,\phi_n^\alpha}_{\CFH}=\delta_{m,n} \quad \forall m,n\in \N.$$
Let us prove now that these functions form a basis of $\CFH$. Indeed, take $f$ in $\CFH$ such that $\scal{f,\phi_k^\alpha}_{\CFH}=0,\forall k$. Then, since $f$ is entire slice regular it admits a series expansion so that $f=\displaystyle\sum_{n=0}^\infty\phi_n^\alpha c_n$ where $(c_n)_n\subset\Hq.$ Notice that from the Proposition \ref{scal} we have $$\scal{f,\phi_k^\alpha}_{\CFH}=c_k, \forall k\geq0.$$
This shows that $f$ is identically zero.
\end{proof}
An immediate consequence is :
\begin{cor} An entire function of the form $\displaystyle f(q)=\sum_{n=0}^\infty q^na_n$ belongs to $\CFH$ if and only if it satisfies the following growth condition $$\displaystyle \sum_{n=0}^\infty \vert{a_n}\vert^2\beta_n(\alpha)<\infty. $$
\end{cor}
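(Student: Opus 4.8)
The plan is to read off the claim from the expansion of $f$ in the orthonormal basis $\{\phi_n^\alpha\}_n$ together with the inner product formula of Proposition \ref{scal}. First I would record the elementary observation that each $\beta_n(\alpha)$ is a strictly positive real number, being a product of a factorial, a power of $2$ and values of $\Gamma$ at positive arguments. In particular $\sqrt{\beta_n(\alpha)}$ is a real scalar, hence commutes with every quaternion, and one has $q^n=\sqrt{\beta_n(\alpha)}\,\phi_n^\alpha(q)$. Thus the given expansion rewrites as $f=\sum_{n=0}^\infty \phi_n^\alpha c_n$ with $c_n:=\sqrt{\beta_n(\alpha)}\,a_n$, so that $|c_n|^2=\beta_n(\alpha)|a_n|^2$.

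The necessity (``only if'') is then immediate: if $f\in\CFH$, applying Proposition \ref{scal} with $g=f$ gives $\norm{f}_{\CFH}^2=\sum_{n=0}^\infty |a_n|^2\beta_n(\alpha)$, which is finite by definition of membership, so the growth condition holds.

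For sufficiency I would argue through the partial sums $S_N(q)=\sum_{n=0}^N q^n a_n$, which are polynomials and hence lie in $\CFH$. By the orthonormality of $\{\phi_n^\alpha\}_n$ and Proposition \ref{scal}, for $N>M$ one has $\norm{S_N-S_M}_{\CFH}^2=\sum_{n=M+1}^N |a_n|^2\beta_n(\alpha)$. Under the hypothesis $\sum_{n=0}^\infty |a_n|^2\beta_n(\alpha)<\infty$ this tail tends to $0$, so $(S_N)_N$ is a Cauchy sequence in $\CFH$; since $\CFH$ is a (right quaternionic) Hilbert space, $(S_N)_N$ converges in norm to some $g\in\CFH$.

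The one step needing care, and which I regard as the main (though mild) obstacle, is the identification of this Hilbert-space limit $g$ with the entire slice regular function $f$ defined by the power series. The cleanest route is to note that each Taylor coefficient functional is continuous on $\CFH$: by Proposition \ref{scal} one has $a_n=\beta_n(\alpha)^{-1/2}\,\scal{f,\phi_n^\alpha}_{\CFH}$, and continuity of the inner product then forces the $n$-th coefficient of $g$ to equal $\lim_{N\to\infty}$ of the $n$-th coefficient of $S_N$, namely $a_n$, for every $n$. Hence $g$ and $f$ have the same power series at the origin, so $g=f$ by uniqueness of the series expansion (equivalently the Identity Principle, Theorem \ref{IdentityPrin}), giving $f=g\in\CFH$. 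This completes both implications, and along the way yields the Parseval identity $\norm{f}_{\CFH}^2=\sum_{n=0}^\infty |a_n|^2\beta_n(\alpha)$.
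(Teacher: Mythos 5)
Your proof is correct and follows exactly the route the paper intends: the paper states this corollary without proof, as an ``immediate consequence'' of the orthonormality of $\{\phi_n^\alpha\}_n$ together with Proposition \ref{scal}, which is precisely the Parseval-type argument you give. Your additional care in the sufficiency direction (Cauchy partial sums, completeness of $\CFH$, and identification of the Hilbert-space limit with $f$ via continuity of the coefficient functionals) simply fills in the details the paper leaves implicit, and all of those steps are sound.
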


\begin{lem} \label{beta}
For all $n\in\N,$ we have $n!\leq \beta_n(\alpha).$
\end{lem}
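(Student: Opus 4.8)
The plan is to reduce the inequality to its boundary case $\alpha=-\frac{1}{2}$ by exploiting monotonicity in $\alpha$, and then to evaluate $\beta_n$ there explicitly. First I would rewrite the Gamma quotient occurring in $\beta_n(\alpha)$ as a finite product of linear factors. Setting $p:=\left[\frac{n+1}{2}\right]$, one has
$$\frac{\Gamma(p+\alpha+1)}{\Gamma(\alpha+1)}=\prod_{\ell=1}^{p}(\alpha+\ell),$$
and since $\alpha\geq-\frac{1}{2}$ each factor satisfies $\alpha+\ell\geq\frac{1}{2}>0$ and is strictly increasing in $\alpha$. A finite product of positive increasing functions is increasing, while the prefactor $2^n\left[\frac{n}{2}\right]!$ is a positive constant independent of $\alpha$; hence $\alpha\mapsto\beta_n(\alpha)$ is nondecreasing on $[-\frac{1}{2},\infty)$ (constant only for $n=0$, where $p=0$ and the product is empty). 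Consequently $\beta_n(\alpha)\geq\beta_n(-\frac{1}{2})$ for every admissible $\alpha$, and it remains only to identify $\beta_n(-\frac{1}{2})$.

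The second and main step is to show $\beta_n(-\frac{1}{2})=n!$, which I would do by treating even and odd $n$ separately and invoking the half-integer values of the Gamma function, in the form $\frac{\Gamma(m+1/2)}{\Gamma(1/2)}=\frac{(2m)!}{4^m\,m!}$. For $n=2k$ one has $\left[\frac{n}{2}\right]=\left[\frac{n+1}{2}\right]=k$, so that
$$\beta_{2k}\!\left(-\tfrac{1}{2}\right)=2^{2k}\,k!\,\frac{(2k)!}{4^k\,k!}=(2k)!,$$
the factor $2^{2k}k!$ cancelling cleanly against $4^k\,k!$. For $n=2k+1$ one has $\left[\frac{n}{2}\right]=k$ and $\left[\frac{n+1}{2}\right]=k+1$, and using $\frac{\Gamma(k+3/2)}{\Gamma(1/2)}=\frac{(2k+2)!}{4^{k+1}(k+1)!}$ together with $(2k+2)=2(k+1)$ one finds
$$\beta_{2k+1}\!\left(-\tfrac{1}{2}\right)=2^{2k+1}\,k!\,\frac{(2k+2)!}{4^{k+1}(k+1)!}=(2k+1)!.$$
In both cases $\beta_n(-\frac{1}{2})=n!$ exactly. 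This is conceptually natural: at $\alpha=-\frac{1}{2}$ the measure $d\lambda_{\alpha,I}$ collapses to the Gaussian and $\CFH$ reduces to the classical slice Fock space of \cite{AlpayColomboSabadini2014}, whose basis normalizing coefficients are precisely $n!$.

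Combining the two steps yields $\beta_n(\alpha)\geq\beta_n(-\frac{1}{2})=n!$ for all $n\in\N$, as claimed. The monotonicity step is immediate, so the only point requiring care is the explicit evaluation at the endpoint, and in particular the bookkeeping in the odd case, where the half-integer Gamma identity and the cancellation of the powers of $2$ and $4$ must be tracked carefully; this is the one place an error could creep in, but it remains a routine computation.
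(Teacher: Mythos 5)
Your proof is correct and follows essentially the same route as the paper's: reduce to the endpoint via monotonicity of $\alpha\mapsto\beta_n(\alpha)$ on $[-\frac{1}{2},\infty)$, then verify $\beta_n(-\frac{1}{2})=n!$ by splitting into even and odd $n$. The only cosmetic difference is that you invoke the half-integer Gamma identity $\Gamma(m+\frac{1}{2})/\Gamma(\frac{1}{2})=(2m)!/(4^m m!)$ where the paper cites the Legendre duplication formula, of which that identity is the integer specialization; your write-up also usefully makes explicit the monotonicity argument (the product $\prod_{\ell=1}^{p}(\alpha+\ell)$ of positive increasing factors) that the paper merely asserts.
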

\begin{proof}
This is a consequence of the Duplication formula for the Gamma function given by 
$$\frac{\Gamma(x)\Gamma(x+\frac{1}{2})}{\Gamma(2x)}=\frac{\sqrt{\pi}}{2^{2x-1}},$$combined with the fact that the function $\beta_n$ is increasing for $\alpha\geq \displaystyle -\frac{1}{2}.$ Indeed, by treating both cases of $n=2k$ and $n=2k+1$ with $k\in \mathbb{N}$ using the Duplication formula we get $$\displaystyle \beta_n\left(-\frac{1}{2}\right)=n!\leq \beta_n(\alpha).$$
\end{proof}
\begin{rem}
$\CFH$ is continuously embedded in the slice Fock space $\mathcal{F}_{Slice}(\Hq)$. Indeed, we have $$\Vert{f}\Vert_{\mathcal{F}_{Slice}(\Hq)}^2=\displaystyle \sum_{n=0}^\infty \vert{a_n}\vert^2n!\leq \displaystyle \sum_{n=0}^\infty \vert{a_n}\vert^2\beta_n(\alpha)= \Vert{f}\Vert_{\CFH}^2.$$ The equality holds if $\displaystyle\alpha=-\frac{1}{2}.$
\end{rem}
In the sequel, we shall prove that $\CFH$ is a quaternionic reproducing kernel Hilbert space and give an expression of its reproducing kernel. To this end, let us fix $q\in\Hq$ and consider the evaluation mapping $$\Lambda_q:\CFH\longrightarrow \Hq; f\mapsto\Lambda_q(f)=f(q).$$ Then, we have the following estimate on $\CFH$ :

\begin{prop}\label{estimate2}
For any $f\in{\CFH},$ there exists $0<C(\vert{q}\vert)\leq e^{\frac{\vert{q}\vert^2}{2}}$ such that $$\vert{\Lambda_q(f)}\vert=\vert{f(q)}\vert\leq C(\vert{q}\vert) \norm{f}_{\CFH}.$$
\end{prop}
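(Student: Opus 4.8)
The plan is to exploit the series representation of $f$ together with the weighted growth characterization, and then to apply a weighted Cauchy--Schwarz inequality. First I would write $f(q)=\sum_{n=0}^\infty q^n a_n$ and use the triangle inequality together with $|q^n|=|q|^n$ to get
$$|f(q)| \le \sum_{n=0}^\infty |q|^n\,|a_n|.$$
Then I would split each summand as $\frac{|q|^n}{\sqrt{\beta_n(\alpha)}}\cdot \sqrt{\beta_n(\alpha)}\,|a_n|$ and apply Cauchy--Schwarz, obtaining
$$|f(q)| \le \left(\sum_{n=0}^\infty \frac{|q|^{2n}}{\beta_n(\alpha)}\right)^{1/2}\left(\sum_{n=0}^\infty \beta_n(\alpha)\,|a_n|^2\right)^{1/2}.$$
By Proposition \ref{scal} the second factor equals $\norm{f}_{\CFH}$, so defining
$$C(|q|):=\left(\sum_{n=0}^\infty \frac{|q|^{2n}}{\beta_n(\alpha)}\right)^{1/2}$$
immediately yields $|\Lambda_q(f)|=|f(q)|\le C(|q|)\,\norm{f}_{\CFH}$. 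Positivity $C(|q|)>0$ is clear since the $n=0$ term contributes $1$.

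It then remains to bound $C(|q|)$ from above by $e^{|q|^2/2}$. Here I would invoke Lemma \ref{beta}, which gives $n!\le \beta_n(\alpha)$ for every $n$, hence $\frac{1}{\beta_n(\alpha)}\le \frac{1}{n!}$. Summing over $n$ yields
$$\sum_{n=0}^\infty \frac{|q|^{2n}}{\beta_n(\alpha)} \le \sum_{n=0}^\infty \frac{|q|^{2n}}{n!} = e^{|q|^2},$$
and taking square roots gives $C(|q|)\le e^{|q|^2/2}$, which is exactly the claimed estimate. As a byproduct, this comparison also shows that the series defining $C(|q|)$ converges for all $q\in\Hq$.

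I do not expect a genuine obstacle in this argument, since it is the standard pointwise reproducing-kernel bound adapted to the weights $\beta_n(\alpha)$; the essential input is precisely the comparison $n!\le\beta_n(\alpha)$ from Lemma \ref{beta}, which is what links the evaluation constant to the Gaussian growth $e^{|q|^2/2}$ of the classical slice Fock space. The only point requiring mild care is the finiteness of both factors when passing to infinite sums in Cauchy--Schwarz: this is guaranteed because $f\in\CFH$ forces $\sum_{n=0}^\infty \beta_n(\alpha)|a_n|^2<\infty$ by the growth condition, while the first factor is finite by the bound just established.
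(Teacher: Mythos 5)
Your proof is correct and follows essentially the same route as the paper: series expansion, the weighted splitting $\frac{|q|^n}{\sqrt{\beta_n(\alpha)}}\cdot\sqrt{\beta_n(\alpha)}|a_n|$ with Cauchy--Schwarz, the identification of the second factor with $\norm{f}_{\CFH}$ via Proposition \ref{scal}, and the bound $C(|q|)\leq e^{|q|^2/2}$ from Lemma \ref{beta}. Your added remarks on positivity and finiteness are a slight refinement of the paper's presentation but do not change the argument.
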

\begin{proof}
The series expansion theorem for slice regular functions asserts that $$f(q)=\displaystyle\sum_{n=0}^\infty q^na_n \quad \text{with} \quad (a_n)_n\subset \Hq.$$
Then using the Cauchy-Schwarz inequality we have the following estimates, \begin{align*}
\vert{\Lambda_q(f)}\vert=\vert{f(q)}\vert & \leq{ \sum_{n=0}^{\infty}\vert{q}\vert^n\vert{a_n}\vert} \\&
                    \leq \sum_{n=0}^{\infty} \frac{\vert{q}\vert^n}{\sqrt{\beta_n(\alpha)}} \sqrt{\beta_n(\alpha)}\vert{a_n}\vert
\\& \leq  \left( \sum_{n=0}^{\infty}\frac{\vert{q}\vert^{2n}}{\beta_n(\alpha)} \right)^{\frac{1}{2}}\left( \sum_{n=0}^{\infty}\beta_n(\alpha)\vert{a_n}\vert^2\right)^\frac{1}{2}
\\&  \leq C(\vert{q}\vert)\Vert{f}\Vert_{\CFH}.
\end{align*}

Notice that thanks to Lemma \ref{beta} this constant could be also estimated so that we have $$C(\vert{q}\vert)=\left( \sum_{n=0}^{\infty}\frac{\vert{q}\vert^{2n}}{\beta_n(\alpha)} \right)^{\frac{1}{2}}\leq e^{\frac{\vert{q}\vert^2}{2}}.$$
\end{proof}

\begin{rem}
Proposition \ref{estimate2} shows that all the evaluation mappings on $\CFH$ are continuous. Then, the Riesz representation theorem for quaternionic Hilbert spaces, see \cite{BDS1982} asserts that $\CFH$ is a quaternionic reproducing kernel Hilbert space.
\end{rem}
 For $p,q\in\Hq,$ we consider the function $$\displaystyle L_\alpha(p,q)=L_\alpha^q(p):=\sum_{n=0}^\infty\frac{p^nq^n}{\beta_n(\alpha)}$$
where $$\displaystyle\beta_n(\alpha):=2^n\left[\frac{n}{2}\right]!\frac{\Gamma\left(\left[\frac{n+1}{2}\right]+\alpha+1\right)}{\Gamma(\alpha+1)}.$$
If $q=x\in\R$ we use the following notation $L_\alpha(p,x)=L_\alpha(px)$ since $px=xp$. \\ \\ The function $$K_\alpha(p,q)=K_\alpha^q(p):=L_\alpha^{\overline{q}}(p)$$ satisfies the following properties
\begin{prop} \label{rkp}
\begin{enumerate} Let $q,s\in\Hq$ fixed,
\item[(a)] $K_q^\alpha\in\CFH$.
\item[(b)] For all $f\in \CFH,$ we have $f(q)=\scal{f,K_q^\alpha}_{\CFH}.$
\item[(c)] $\scal{K_q^\alpha,K^\alpha_s}_{\CFH}=K^\alpha_q(s).$
\end{enumerate}
\end{prop}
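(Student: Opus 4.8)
The plan is to verify the three reproducing-kernel properties in order, relying on the series characterization from Proposition \ref{scal} and the growth condition in the preceding Corollary. The key observation throughout is that $K_q^\alpha(p)=L_\alpha^{\overline q}(p)=\sum_{n=0}^\infty p^n\overline q^n/\beta_n(\alpha)$, so as a function of the variable $p$ its $n$-th Taylor coefficient is $a_n(q):=\overline q^n/\beta_n(\alpha)$.

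For part (a), I would show $K_q^\alpha\in\CFH$ by checking the growth condition $\sum_n |a_n(q)|^2\beta_n(\alpha)<\infty$. Since $|a_n(q)|^2\beta_n(\alpha)=|q|^{2n}/\beta_n(\alpha)$, this sum equals $\sum_n |q|^{2n}/\beta_n(\alpha)$, which is finite by the estimate in Proposition \ref{estimate2} (it is bounded by $e^{|q|^2}$ via Lemma \ref{beta}). One should also note $K_q^\alpha$ is entire slice regular in $p$ because the series converges on all of $\Hq$, so the Corollary applies.

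For part (b), I would apply Proposition \ref{scal} with $f(p)=\sum_n p^n a_n$ and $g=K_q^\alpha$, whose coefficients are $b_n=\overline q^n/\beta_n(\alpha)$. The inner product formula gives
\begin{equation*}
\scal{f,K_q^\alpha}_{\CFH}=\sum_{n=0}^\infty \overline{b_n}\,a_n\,\beta_n(\alpha)
=\sum_{n=0}^\infty \frac{q^n}{\beta_n(\alpha)}\,a_n\,\beta_n(\alpha)
=\sum_{n=0}^\infty q^n a_n = f(q),
\end{equation*}
where $\overline{b_n}=\overline{\overline q^n/\beta_n(\alpha)}=q^n/\beta_n(\alpha)$ since $\beta_n(\alpha)$ is real. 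Part (c) is then an immediate specialization of (b): take $f=K_s^\alpha$, so that $\scal{K_s^\alpha,K_q^\alpha}_{\CFH}=K_s^\alpha(q)$, and I would reconcile this with the claimed $K_q^\alpha(s)$ by checking the symmetry $K_s^\alpha(q)=K_q^\alpha(s)$ directly from the defining series.

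The main obstacle is a bookkeeping subtlety rather than a deep difficulty: one must be careful about the order of the quaternionic factors and the placement of conjugates, since $\Hq$ is noncommutative. In particular, the conjugate $\overline{b_n}$ must be computed correctly and the scalars $\beta_n(\alpha)$ exploited as real numbers that commute with everything; one should also confirm that the symmetry relation $K_q^\alpha(s)=K_s^\alpha(q)$ in part (c) holds with the factors in the stated order, paying attention to whether $\sum_n s^n\overline q^n/\beta_n(\alpha)$ genuinely equals $\sum_n q^n\overline s^n/\beta_n(\alpha)$. Verifying this carefully is where I would focus, as everything else reduces to direct application of Proposition \ref{scal}.
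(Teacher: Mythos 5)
Your parts (a) and (b) are correct and coincide with the paper's own argument: (a) is exactly the check $\sum_n \beta_n(\alpha)|a_n(q)|^2=\sum_n |q|^{2n}/\beta_n(\alpha)=C^2(|q|)\leq e^{|q|^2}$, and (b) is the application of Proposition \ref{scal} with $\overline{b_n}=q^n/\beta_n(\alpha)$. The gap is in part (c): the symmetry you stake it on, $K_s^\alpha(q)=K_q^\alpha(s)$, is false in the quaternionic setting. Since quaternionic conjugation reverses products and $\beta_n(\alpha)$ is real, one has
$$\overline{K_s^\alpha(q)}=\sum_{n=0}^\infty \overline{q^n\overline{s}^n}\,\frac{1}{\beta_n(\alpha)}=\sum_{n=0}^\infty s^n\overline{q}^n\,\frac{1}{\beta_n(\alpha)}=K_q^\alpha(s),$$
so the kernel is Hermitian-symmetric, $K_q^\alpha(s)=\overline{K_s^\alpha(q)}$, and plain symmetry holds only when the value happens to be real. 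Concretely, for $q=i$, $s=j$ every odd term of $K_q^\alpha(s)$ equals $k/\beta_n(\alpha)$ while the corresponding term of $K_s^\alpha(q)$ equals $-k/\beta_n(\alpha)$, so the two values genuinely differ. Thus the verification you said you would focus on cannot succeed as stated.

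The repair is immediate, and once made your proof matches the paper's. Either specialize (b) the other way around: $K_q^\alpha\in\CFH$ by (a), so reproducing it at the point $s$ gives $\scal{K_q^\alpha,K_s^\alpha}_{\CFH}=K_q^\alpha(s)$, which is (c) verbatim; or simply observe that the identity you did derive, $\scal{K_s^\alpha,K_q^\alpha}_{\CFH}=K_s^\alpha(q)$ for arbitrary $q,s\in\Hq$, is the same universally quantified statement after renaming the two points, so no symmetry of the kernel is needed at all. (A third route, the one the paper takes, is to expand both kernels as series with coefficients $\overline{q}^n/\beta_n(\alpha)$ and $\overline{s}^n/\beta_n(\alpha)$ and apply Proposition \ref{scal} directly, which yields $\sum_n s^n\overline{q}^n/\beta_n(\alpha)=K_q^\alpha(s)$; this is the same computation underlying (b). One can also combine your identity with $\overline{\scal{h,l}}=\scal{l,h}$ and the Hermitian symmetry displayed above.)
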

\begin{proof}
\begin{enumerate}
\item We have by definition $$\displaystyle K_\alpha^q(p):=\sum_{n=0}^\infty p^na_n \quad \text{where} \quad a_n=\frac{\overline{q}^n}{\beta_n(\alpha)}.$$
Thus,
\begin{align*}
\displaystyle\sum_{n=0}^\infty\beta_n(\alpha)\vert{a_n}\vert^2 &=\sum_{n=0}^\infty \frac{\vert{q}\vert^{2n}}{\beta_n(\alpha)}  \\&
                    = C^2(\vert{q}\vert)
\\& \leq  e^{\vert{q}\vert^2}< \infty.
\\&
\end{align*}
\item Let $f\in \CFH$; since $f$ is slice regular on $\Hq$ we can write $f(p)=\displaystyle \sum_{n=0}^\infty p^nb_n \quad \text{with}\quad (b_n)_n\subset\Hq.$ Then, using the expression of $K_q^\alpha$ combined with Proposition \ref{scal} we obtain $$\scal{f,K_q^\alpha}_{\CFH}=\sum_{n=0}^\infty q^nb_n=f(q).$$
\item We just need to write $$K_q^\alpha(p)=\displaystyle\sum_{n=0}^\infty\frac{\overline{q}^n}{\beta_n(\alpha)}p^n \quad \text{and} \quad K_s^\alpha(p)=\displaystyle\sum_{n=0}^\infty\frac{\overline{s}^n}{\beta_n(\alpha)}p^n$$ and then use the Proposition \ref{scal}.
\end{enumerate}
\end{proof}
The results obtained in this section may be summarized in the following
\begin{thm}
$\CFH$ is a quaternionic reproducing kernel Hilbert space whose reproducing kernel is given by the following formula $$K_\alpha(p,q)=\sum_{n=0}^\infty\frac{p^n\overline{q^n}}{\beta_n(\alpha)} \quad \text{for all } (p,q)\in\Hq\times\Hq.$$
\end{thm}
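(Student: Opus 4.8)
The plan is to assemble this theorem as a synthesis of the results already established in this section, since each ingredient has been verified separately and only needs to be combined. First I would recall that $\CFH$ is a right quaternionic Hilbert space with respect to $\scal{\cdot,\cdot}_{\CFH}$, as shown earlier, and that by Proposition \ref{scal} the inner product is independent of the choice of imaginary unit $I$, so the whole structure is intrinsic and well defined.

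Next I would invoke the continuity of the evaluation functionals. By Proposition \ref{estimate2}, for every fixed $q\in\Hq$ the evaluation $\Lambda_q(f)=f(q)$ satisfies $\vert f(q)\vert\leq C(\vert q\vert)\norm{f}_{\CFH}$ with $C(\vert q\vert)\leq e^{\vert q\vert^2/2}$; hence each $\Lambda_q$ is a bounded right quaternionic linear functional. The Riesz representation theorem for quaternionic Hilbert spaces from \cite{BDS1982} then provides a unique element of $\CFH$ representing $\Lambda_q$, which is exactly what makes $\CFH$ a quaternionic reproducing kernel Hilbert space.

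It remains to identify the reproducing kernel explicitly, and here I would appeal to Proposition \ref{rkp}: part (a) guarantees $K_q^\alpha\in\CFH$, while part (b) gives $f(q)=\scal{f,K_q^\alpha}_{\CFH}$ for every $f\in\CFH$. By uniqueness of the representing element, $K_q^\alpha$ is precisely the reproducing kernel at the point $q$, so that the reproducing kernel of $\CFH$ is $K_\alpha(p,q)=K_q^\alpha(p)$. Unwinding the definitions, $K_\alpha^q(p)=L_\alpha^{\overline{q}}(p)=\sum_{n=0}^\infty p^n\overline{q}^{\,n}/\beta_n(\alpha)$, and since $\overline{q^n}=\overline{q}^{\,n}$ (which follows from the relation $\overline{pq}=\overline{q}\,\overline{p}$ stated in the preliminaries), this is exactly the claimed closed form.

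The only step demanding a little care — and what I would regard as the main, though modest, obstacle — is the convergence and membership claim for the kernel series itself, namely that $\sum_{n=0}^\infty p^n\overline{q^n}/\beta_n(\alpha)$ really defines an element of $\CFH$ for each fixed $q$ rather than a purely formal object. This is precisely the estimate carried out in part (a) of Proposition \ref{rkp}, where for $a_n=\overline{q}^{\,n}/\beta_n(\alpha)$ one has $\sum_{n}\beta_n(\alpha)\vert a_n\vert^2=\sum_{n}\vert q\vert^{2n}/\beta_n(\alpha)=C^2(\vert q\vert)\leq e^{\vert q\vert^2}<\infty$; combined with the growth criterion for membership in $\CFH$ obtained in the Corollary, this confirms $K_q^\alpha\in\CFH$ and closes the argument.
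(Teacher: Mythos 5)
Your proposal is correct and follows essentially the same route as the paper, which proves the theorem simply by citing Proposition \ref{estimate2} (continuity of evaluations, hence a reproducing kernel Hilbert space via the quaternionic Riesz representation theorem) together with Proposition \ref{rkp} (membership and reproducing property of $K_q^\alpha$). Your write-up merely makes explicit the combination and the uniqueness step that the paper leaves implicit.
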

\begin{proof}
It is a consequence of the Propositions \ref{estimate2} and \ref{rkp}.
\end{proof}
\begin{rem}
For $\displaystyle\alpha=-\frac{1}{2}$, it turns out that $K_\alpha(.,.)$ is the reproducing kernel of the slice Fock space $\mathcal{F}_{Slice}(\Hq)$ obtained in \cite{AlpayColomboSabadini2014} and given by $$e_*(p\overline{q})=\displaystyle\sum_{n=0}^\infty\frac{p^n\overline{q}^n}{n!}.$$
\end{rem}
\section{A unitary integral transform associated to $\CFH$}
In this section we introduce an integral operator $T_\alpha$ and show that it defines an isometric isomorphism between the quaternionic slice Cholewinski Fock space introduced in the last section and a specific quaternionic Hilbert space on the real line, namely $\mathcal{H}_\alpha=L^2_\Hq(\R,d\mu_\alpha)$ where

$$\displaystyle d\mu_\alpha(x):=\frac{\vert{x}\vert^{2\alpha+1}}{2^{\alpha+1}\Gamma(\alpha+1)}dx.$$
Note that the quaternionic Hilbert space $\mathcal{H}_\alpha$ is endowed by the inner product $$\displaystyle\scal{\psi,\phi}_{\mathcal{H}_\alpha}:=\int_\R \overline{\phi(x)}\psi(x)d\mu_\alpha(x).$$
 Note that $\mathcal{H}_{-\frac{1}{2}}$ is the standard Hilbert space $L^2_\Hq(\R)$. In this case, the isomorphism $T_\alpha$ is the quaternionic Segal-Bargmann transform introduced and studied in \cite{DG2017}. \\ \\
 Let us consider the kernel
$$\displaystyle\mathcal{C}_\alpha(p,x):=2^{\frac{\alpha+1}{2}}e^{-\frac{1}{2}(p^2+x^2)}L_\alpha(\sqrt{2}px) \quad \forall (p,x)\in \Hq\times\R$$
so that, for $\varphi\in\mathcal{H}_\alpha$ and $q\in\Hq$ we define $$\displaystyle T_\alpha\varphi(q):=\int_\R \mathcal{C}_\alpha(q,x)\varphi(x)d\mu_\alpha(x).$$
In the sequel, we study the integral transform $T_\alpha$. To this end, let us recall some properties of the so called generalized Hermite polynomials, see \cite{Rosenblum1994,Rosler1998,SifiSoltani2002}. \\ These generalized Hermite polynomials are defined by

$$H_n^\alpha(x):=\displaystyle \sum_{k=0}^{[\frac{n}{2}]}\frac{(-1)^k\beta_n(\alpha)}{k!\beta_{n-k}(\alpha)}(2x)^{n-2k}; x\in\R$$ their generating function in the classical complex case is given by

$$\displaystyle e^{-\frac{z^2}{4}}L_\alpha(zx)=\sum_{n=0}^\infty\frac{H_{n}^{\alpha}(x)}{2^n\beta_n(\alpha)}z^n; \quad z\in\C,x\in\R.$$
Moreover, it turns out that the Hermite functions $$h_n^\alpha(x):=\displaystyle \frac{2^{-\frac{(n-\alpha-1)}{2}}}{\sqrt{\beta_n(\alpha)}}e^{-\frac{x^2}{2}}H_n^\alpha(x)$$ associated to these polynomials form an orthonormal basis of the Hilbert space $\mathcal{H}_\alpha$. \\ \\
Then, similarly to the complex case we prove the following
\begin{lem}
Let $p\in\Hq$ and $x\in\R$. Then, we have $$\displaystyle e^{-\frac{p^2}{4}}L_\alpha(px)=\sum_{n=0}^\infty\frac{H_{n}^{\alpha}(x)}{2^n\beta_n(\alpha)}p^n.$$
\end{lem}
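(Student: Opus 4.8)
The plan is to regard both sides as entire slice regular functions of the quaternionic variable $p$, with $x\in\R$ fixed, and to deduce the identity from its already-established complex version by the Identity Principle. The decisive simplification is that $x$ is real. Since $px=xp$, the kernel expands as
$$L_\alpha(px)=\sum_{n=0}^\infty \frac{x^n}{\beta_n(\alpha)}\,p^n,$$
a power series in $p$ whose coefficients are real, while $e^{-\frac{p^2}{4}}=\sum_{k=0}^\infty \frac{(-1)^k}{4^k\,k!}\,p^{2k}$ is quaternionic intrinsic, again with real coefficients. By the remark on products of slice regular functions (one factor being quaternionic intrinsic), the left-hand side is therefore slice regular in $p$; the right-hand side is manifestly a power series in $p$ with real coefficients, hence also entire slice regular, its convergence on all of $\Hq$ being inherited from the complex generating function recalled above.

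First I would fix an imaginary unit $I\in\Sq$ and restrict both sides to the slice $\C_I$, which I identify with $\C$ through $I\mapsto i$. Under this field isomorphism each of $p^n$, $e^{-\frac{p^2}{4}}$, $L_\alpha(px)$ and the series $\sum_n H_n^\alpha(x)p^n/(2^n\beta_n(\alpha))$ maps to its complex counterpart, precisely because all the coefficients are real and thus are left unchanged by the identification. Consequently the restriction to $\C_I$ of the asserted quaternionic identity is exactly the complex generating function
$$e^{-\frac{z^2}{4}}L_\alpha(zx)=\sum_{n=0}^\infty\frac{H_n^\alpha(x)}{2^n\beta_n(\alpha)}\,z^n,\qquad z\in\C,$$
which holds by hypothesis for every $z\in\C_I$.

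It then remains to pass from the single slice $\C_I$ back to all of $\Hq$. Since both sides are entire slice regular in $p$ and coincide on $\C_I$, a set having accumulation points in $\C_I$, the Identity Principle (Theorem \ref{IdentityPrin}) forces them to agree on the whole of $\Hq$, which is the claim. The one point genuinely requiring care is the very first step: one must check that the quaternionic product $e^{-\frac{p^2}{4}}L_\alpha(px)$ really does reorganize, termwise in $p$, into a single power series with the same coefficients as in the complex case, so that restriction to a slice reproduces the complex identity rather than some twisted version of it. This is where the non-commutativity of $\Hq$ could in principle interfere, but it does not, because every scalar coefficient appearing in the Cauchy product is real and hence commutes with all powers of $p$; I therefore do not expect a substantive obstacle.
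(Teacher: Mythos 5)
Your proposal is correct and follows essentially the same route as the paper: both arguments establish that the two sides are entire slice regular in $p$ (using that $e^{-\frac{p^2}{4}}$ is quaternionic intrinsic and that $x$ is real, so the product remains slice regular), observe that the restriction to a slice $\C_I$ reduces to the known complex generating function, and conclude by the Identity Principle. The only cosmetic difference is that the paper also packages the slice-regularity step through the uniqueness statement of the Extension Lemma before invoking the Identity Principle, whereas you apply the Identity Principle directly; the mathematical content is identical.
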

\begin{proof}
Set $$f_i(z)=e^{-\frac{z^2}{4}}L_\alpha(zx) \quad \text{and} \quad  g_i(z)=\displaystyle\sum_{n=0}^\infty \frac{H_n^\alpha(x)}{2^n\beta_n(\alpha)}z^n;\quad \forall z\in\C_i. $$  Notice that $f_i$ and $g_i$ are two entire functions. Then, they could be extended into two slice entire regular functions denoted respectively $ext(f_i)$ and $ext(g_i)$. On the other hand, the functions $$F(q)=e^{-\frac{q^2}{4}}L_\alpha(qx) \quad \text{and} \quad G(q)=\displaystyle\sum_{n=0}^\infty \frac{H_n^\alpha(x)}{2^n\beta_n(\alpha)}q^n$$ are entire slice regular on $\Hq$ since $q\mapsto e^{-\frac{q^2}{4}}$ is quaternionic intrinsic. It follows then from the uniqueness in the Lemma \ref{extensionLem} that $F=ext(f_i)$ and $G=ext(g_i)$. Finally, since $F$ and $G$ coincide on the slice $\C_i$ we use the identity principle to conclude the proof.
\end{proof}
As a consequence of the last lemma we have
\begin{prop}\label{GF}
Let $p\in\Hq$ and $x\in\R$. Then, $$\displaystyle\mathcal{C}_\alpha(p,x)=\sum_{n=0}^\infty h_n^\alpha(x) \frac{p^n}{\sqrt{\beta_n(\alpha)}}.$$
\end{prop}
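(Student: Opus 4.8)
The plan is to derive this generating-function identity directly from the preceding lemma by a single substitution followed by careful bookkeeping of the scalar prefactors. First I would take the identity $e^{-p^2/4}L_\alpha(px)=\sum_{n=0}^\infty \frac{H_n^\alpha(x)}{2^n\beta_n(\alpha)}p^n$ and replace $p$ by $\sqrt{2}\,p$. Since $p\mapsto e^{-p^2/4}$ is quaternionic intrinsic and $\sqrt{2}$ is real, this substitution is legitimate, and using $(\sqrt{2}\,p)^2=2p^2$ together with $(\sqrt{2}\,p)^n=2^{n/2}p^n$ it yields
$$e^{-p^2/2}L_\alpha(\sqrt{2}\,px)=\sum_{n=0}^\infty \frac{H_n^\alpha(x)}{2^{n/2}\beta_n(\alpha)}p^n.$$

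Next I would multiply both sides by the factor $2^{(\alpha+1)/2}e^{-x^2/2}$, which is constant in $p$. The left-hand side becomes precisely $2^{(\alpha+1)/2}e^{-\frac{1}{2}(p^2+x^2)}L_\alpha(\sqrt{2}\,px)=\mathcal{C}_\alpha(p,x)$ by the definition of the kernel, while the coefficient of $p^n$ on the right becomes $\frac{2^{(\alpha+1)/2-n/2}}{\beta_n(\alpha)}e^{-x^2/2}H_n^\alpha(x)$.

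The only point requiring care is to recognize these coefficients as $h_n^\alpha(x)/\sqrt{\beta_n(\alpha)}$. Recalling that $h_n^\alpha(x)=\frac{2^{-(n-\alpha-1)/2}}{\sqrt{\beta_n(\alpha)}}e^{-x^2/2}H_n^\alpha(x)$, one has $h_n^\alpha(x)/\sqrt{\beta_n(\alpha)}=\frac{2^{-(n-\alpha-1)/2}}{\beta_n(\alpha)}e^{-x^2/2}H_n^\alpha(x)$, so the claim reduces to the elementary identity of exponents $\frac{\alpha+1}{2}-\frac{n}{2}=-\frac{n-\alpha-1}{2}$, which holds trivially. Matching the two series term by term then gives the stated formula. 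Convergence is inherited from the lemma, since multiplying a convergent power series in $p$ by a factor independent of $p$ preserves convergence; hence there is no genuine obstacle here beyond the arithmetic of the powers of two.
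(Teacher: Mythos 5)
Your proof is correct and follows essentially the same route as the paper: both apply the preceding lemma at the quaternion $\sqrt{2}\,p$, invoke the definition of $h_n^\alpha$, and reduce the claim to the arithmetic of the powers of $2$. The only cosmetic difference is the direction of the computation (you transform the lemma's identity into the kernel, while the paper expands the series and recognizes the kernel), which is immaterial.
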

\begin{proof}
Let $(p,x)\in\Hq\times\R,$ and write $$\displaystyle\sum_{n=0}^\infty h_n^\alpha(x) \frac{p^n}{\sqrt{\beta_n(\alpha)}}=2^{\frac{\alpha+1}{2}}e^{-\frac{x^2}{2}}\sum_{n=0}^\infty \frac{H_n^\alpha(x)}{\beta_n(\alpha)}\left(\frac{\sqrt{2}p}{2}\right)^n$$
Then, by taking $q=\sqrt{2}p\in\Hq$ we can apply the last lemma to get
\begin{align*}
\displaystyle\sum_{n=0}^\infty h_n^\alpha(x) \frac{p^n}{\sqrt{\beta_n(\alpha)}}
 &=2^{\frac{\alpha+1}{2}}e^{-\frac{1}{2}(x^2+p^2)}L_\alpha(\sqrt{2}px)
 \\& =\mathcal{C}_\alpha(p,x).
 \\
 \end{align*}
\end{proof}
Another interesting property of the kernel $\mathcal{C}_\alpha$ is given by
\begin{lem} \label{ker}
Let $p,q\in\Hq.$ Then, $$\displaystyle\int_\R \mathcal{C}_\alpha(p,x)\mathcal{C}_\alpha(q,x)d\mu_\alpha(x)=L_\alpha(p,q).$$
\end{lem}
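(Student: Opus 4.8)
The plan is to expand both kernel factors using the series representation obtained in Proposition \ref{GF}, namely $\mathcal{C}_\alpha(p,x)=\sum_{n=0}^\infty h_n^\alpha(x)\frac{p^n}{\sqrt{\beta_n(\alpha)}}$ and likewise for $q$, and then to integrate term by term against $d\mu_\alpha$. The decisive structural fact I would exploit is that the generalized Hermite functions $h_n^\alpha$ are real-valued on $\R$ and form an orthonormal basis of $\mathcal{H}_\alpha$; since they are real, the conjugation appearing in the inner product of $\mathcal{H}_\alpha$ is invisible and one has the plain orthogonality relation $\int_\R h_n^\alpha(x)h_m^\alpha(x)d\mu_\alpha(x)=\delta_{n,m}$. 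This is exactly what is needed, because the integrand in the statement carries no conjugate.

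Granting the term-by-term integration, the computation is immediate. Since each $h_n^\alpha(x)$ is a real scalar it commutes with the quaternions $p^n$ and $q^m$, so that
\begin{align*}
\int_\R \mathcal{C}_\alpha(p,x)\mathcal{C}_\alpha(q,x)d\mu_\alpha(x)
&=\sum_{n,m=0}^\infty \frac{p^n}{\sqrt{\beta_n(\alpha)}}\left(\int_\R h_n^\alpha(x)h_m^\alpha(x)d\mu_\alpha(x)\right)\frac{q^m}{\sqrt{\beta_m(\alpha)}}\\
&=\sum_{n=0}^\infty \frac{p^nq^n}{\beta_n(\alpha)}=L_\alpha(p,q),
\end{align*}
where the last equality is just the definition of $L_\alpha(p,q)$.

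The only point to be handled with care --- and hence the main obstacle --- is the justification of the interchange of the double summation with the integral. Here I would argue in the Hilbert space $\mathcal{H}_\alpha$ rather than pointwise. For fixed $p$, the estimate $\sum_{n=0}^\infty |p|^{2n}/\beta_n(\alpha)\leq e^{|p|^2}<\infty$ (the same bound used in Proposition \ref{estimate2}) shows that the partial sums $S_N(p,\cdot):=\sum_{n=0}^N \frac{p^n}{\sqrt{\beta_n(\alpha)}}h_n^\alpha$ form a Cauchy sequence and converge to $\mathcal{C}_\alpha(p,\cdot)$ in the norm of $\mathcal{H}_\alpha$; in particular $\mathcal{C}_\alpha(p,\cdot)\in\mathcal{H}_\alpha$. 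Moreover, the bilinear pairing $(\psi,\phi)\mapsto\int_\R \psi(x)\phi(x)d\mu_\alpha(x)$ is continuous on $\mathcal{H}_\alpha\times\mathcal{H}_\alpha$, since $\left|\int_\R \psi\phi\,d\mu_\alpha\right|\leq \int_\R|\psi||\phi|\,d\mu_\alpha\leq \norm{\psi}_{\mathcal{H}_\alpha}\norm{\phi}_{\mathcal{H}_\alpha}$ by Cauchy--Schwarz. Evaluating this pairing on $S_N(p,\cdot)$ and $S_N(q,\cdot)$ involves only finite sums, so the orthogonality relation collapses the double sum onto its diagonal with no convergence issue; letting $N\to\infty$ and invoking the continuity just established then transfers the identity to $\mathcal{C}_\alpha(p,\cdot)$ and $\mathcal{C}_\alpha(q,\cdot)$, which yields the claim.
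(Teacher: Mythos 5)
Your proof is correct and follows essentially the same route as the paper's: expand both kernels via Proposition \ref{GF} and collapse the double sum using the orthonormality of the generalized Hermite functions $h_n^\alpha$ in $\mathcal{H}_\alpha$. The only difference is that you carefully justify the interchange of summation and integration (norm convergence of the partial sums plus Cauchy--Schwarz continuity of the pairing), a step the paper performs without comment, so your write-up is if anything slightly more complete.
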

\begin{proof}
Let $p,q\in\Hq,$ making use of the Proposition \ref{GF} we can write
$$\displaystyle\mathcal{C}_\alpha(p,x)=\sum_{n=0}^\infty h_n^\alpha(x) \frac{p^n}{\sqrt{\beta_n(\alpha)}}\quad \text{and} \quad \displaystyle\mathcal{C}_\alpha(q,x)=\sum_{n=0}^\infty h_n^\alpha(x) \frac{q^n}{\sqrt{\beta_n(\alpha)}}.$$
Thus, \begin{align*}
\displaystyle \int_\R\mathcal{C}_\alpha(p,x)\mathcal{C}_\alpha(q,x)d\mu_\alpha(x)
 &=\int_\R \left(\sum_{n=0}^\infty h_n^\alpha(x) \frac{p^n}{\sqrt{\beta_n(\alpha)}}\right)\left(\sum_{m=0}^\infty h_m^\alpha(x) \frac{q^m}{\sqrt{\beta_m(\alpha)}}\right)d\mu_\alpha(x)
 \\& =\sum_{n,m=0}^\infty\frac{p^nq^m}{\sqrt{\beta_n(\alpha)}\sqrt{\beta_m(\alpha)}}\int_\R h_n^\alpha(x)h_m^\alpha(x)d\mu_\alpha(x).
 \\
 \end{align*}
Then, since $\lbrace{h_n^\alpha}\rbrace_{n\geq0}$ form an orthonormal set in $\mathcal{H}_\alpha$ we have \begin{align*}
\displaystyle \int_\R\mathcal{C}_\alpha(p,x)\mathcal{C}_\alpha(q,x)d\mu_\alpha(x)
 &=\sum_{n,m=0}^\infty\frac{p^nq^m}{\sqrt{\beta_n(\alpha)}\sqrt{\beta_m(\alpha)}}\delta_{n,m}
 \\& =L_{\alpha}(p,q).
 \\
 \end{align*}
\end{proof}
Therefore, we have
\begin{prop} \label{estimation} Let $q\in\Hq$ and $\varphi\in\mathcal{H}_\alpha$. Then,
\begin{enumerate}
\item $\Vert{\mathcal{C}_\alpha^q}\Vert_{\mathcal{H}_\alpha}=\sqrt{L_\alpha(q,\overline{q})}=\sqrt{L_\alpha(\vert{q}\vert^2)}.$
\item $\vert{T_\alpha\varphi(q)}\vert\leq \sqrt{L_\alpha(\vert{q}\vert^2)}\Vert{\varphi}\Vert_{\mathcal{H}_\alpha}. $
\end{enumerate}
\end{prop}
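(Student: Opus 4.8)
The plan is to establish (1) first and then deduce (2) from it via the Cauchy--Schwarz inequality. For (1), I would compute the norm directly from the definition of the scalar product on $\mathcal{H}_\alpha$, writing
$$\norm{\mathcal{C}_\alpha^q}_{\mathcal{H}_\alpha}^2=\int_\R \overline{\mathcal{C}_\alpha(q,x)}\,\mathcal{C}_\alpha(q,x)\,d\mu_\alpha(x).$$
The key observation is that, because the Hermite functions $h_n^\alpha$ are real-valued, conjugating the expansion in Proposition \ref{GF} only replaces $q^n$ by $\overline{q}^n$, so that $\overline{\mathcal{C}_\alpha(q,x)}=\mathcal{C}_\alpha(\overline{q},x)$. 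Substituting this identity turns the integral above into $\int_\R \mathcal{C}_\alpha(\overline{q},x)\,\mathcal{C}_\alpha(q,x)\,d\mu_\alpha(x)$, which is precisely the integral evaluated in Lemma \ref{ker} with $p=\overline{q}$; hence it equals $L_\alpha(\overline{q},q)$.

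It then remains to identify $L_\alpha(\overline{q},q)$ with $L_\alpha(\vert q\vert^2)$. Here I would use that $q$ and $\overline{q}$ commute (both lie in a common slice $\C_I$), so that $\overline{q}^nq^n=(\overline{q}q)^n=\vert q\vert^{2n}$ for every $n$. Consequently
$$L_\alpha(\overline{q},q)=\sum_{n=0}^\infty\frac{\overline{q}^nq^n}{\beta_n(\alpha)}=\sum_{n=0}^\infty\frac{\vert q\vert^{2n}}{\beta_n(\alpha)}=L_\alpha(\vert q\vert^2)=L_\alpha(q,\overline{q}),$$
and taking square roots yields (1).

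For (2), the plan is to recognize $T_\alpha\varphi(q)$ as an $\mathcal{H}_\alpha$-inner product and then apply Cauchy--Schwarz. Setting $\phi:=\mathcal{C}_\alpha^{\overline{q}}=\overline{\mathcal{C}_\alpha^q}$, the relation $\overline{\mathcal{C}_\alpha(q,x)}=\mathcal{C}_\alpha(\overline{q},x)$ gives $\overline{\phi(x)}=\mathcal{C}_\alpha(q,x)$, so that
$$T_\alpha\varphi(q)=\int_\R \mathcal{C}_\alpha(q,x)\varphi(x)\,d\mu_\alpha(x)=\int_\R \overline{\phi(x)}\varphi(x)\,d\mu_\alpha(x)=\scal{\varphi,\phi}_{\mathcal{H}_\alpha}.$$
The Cauchy--Schwarz inequality on the quaternionic Hilbert space $\mathcal{H}_\alpha$ then yields $\vert T_\alpha\varphi(q)\vert\leq\norm{\varphi}_{\mathcal{H}_\alpha}\norm{\phi}_{\mathcal{H}_\alpha}$, and since $\vert\overline{\mathcal{C}_\alpha(q,x)}\vert=\vert\mathcal{C}_\alpha(q,x)\vert$ we have $\norm{\phi}_{\mathcal{H}_\alpha}=\norm{\mathcal{C}_\alpha^q}_{\mathcal{H}_\alpha}=\sqrt{L_\alpha(\vert q\vert^2)}$ by (1); this is exactly the claimed estimate.

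The main point requiring care is the quaternionic non-commutativity: one must keep the conjugations in their correct positions so that $T_\alpha\varphi(q)$ genuinely reads as $\scal{\varphi,\phi}_{\mathcal{H}_\alpha}$ and Cauchy--Schwarz applies in the right-quaternionic setting, and one must use that $q$ commutes with $\overline{q}$ to collapse $\overline{q}^nq^n$ into $\vert q\vert^{2n}$. The interchange of summation and integration needed to invoke the orthonormality of $\{h_n^\alpha\}$ is routine and was already carried out in the proof of Lemma \ref{ker}, so it can simply be reused.
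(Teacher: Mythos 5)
Your proposal is correct and follows essentially the same route as the paper: part (1) is Lemma \ref{ker} applied with $p=\overline{q}$ together with the conjugation identity $\overline{\mathcal{C}_\alpha(q,x)}=\mathcal{C}_\alpha(\overline{q},x)$ (which you, unlike the paper, also justify from the expansion in Proposition \ref{GF}), and part (2) is a Cauchy--Schwarz estimate that reduces to part (1). The only cosmetic difference is in (2): the paper first bounds $\vert T_\alpha\varphi(q)\vert\leq\int_\R\vert\mathcal{C}_\alpha(q,x)\vert\,\vert\varphi(x)\vert\,d\mu_\alpha(x)$ and applies the scalar Cauchy--Schwarz inequality, whereas you identify $T_\alpha\varphi(q)=\scal{\varphi,\mathcal{C}_\alpha^{\overline{q}}}_{\mathcal{H}_\alpha}$ and invoke the quaternionic Hilbert-space Cauchy--Schwarz; both yield the identical bound.
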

\begin{proof}
\begin{enumerate}
\item It is a direct consequence of the Lemma \ref{ker} combined with the identity $\overline{\mathcal{C}_\alpha(q,x)}=\mathcal{C}_\alpha(\overline{q},x)$ for all $q\in\Hq.$
\item We start by writing $$\displaystyle \vert{T_\alpha\varphi(q)}\vert \leq\int_\R\vert{\mathcal{C}_\alpha(q,x)}\vert\vert{\varphi(x)}\vert d\mu_\alpha(x)$$
Then, we use the Cauchy-Schwarz inequality to complete the proof.
\end{enumerate}
\end{proof}
\begin{rem}
For $n\in\N,$ we have $T_\alpha h_n^\alpha=\phi_n^\alpha$ and $\Vert{T_\alpha h_n^\alpha }\Vert_{\CFH}=\Vert{h_n^\alpha}\Vert_{\mathcal{H}_\alpha}.$ Moreover, if $\displaystyle\varphi=\sum_{n=0}^N h_n^\alpha c_n$ then $\displaystyle\Vert{T_{\alpha}\varphi}\Vert_{\CFH}=\sum_{n=0}^N\vert{c_n}\vert^2=\Vert{\varphi}\Vert_{\mathcal{H}_{\alpha}}$.
\end{rem}
Finally, we prove the following
\begin{thm}
The integral transform $T_\alpha$ is an isometric isomorphism mapping the quaternionic Hilbert space $\mathcal{H}_\alpha$ onto $\CFH$.
\end{thm}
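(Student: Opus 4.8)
The plan is to exploit the fact that $T_\alpha$ carries the orthonormal basis $\{h_n^\alpha\}_n$ of $\mathcal{H}_\alpha$ onto the orthonormal basis $\{\phi_n^\alpha\}_n$ of $\CFH$, and then to promote this basis correspondence to a global isometric isomorphism by a density argument.

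First I would establish the identity $T_\alpha h_n^\alpha=\phi_n^\alpha$ for every $n\in\N$. Starting from the expansion $\mathcal{C}_\alpha(q,x)=\sum_{m=0}^\infty h_m^\alpha(x)\frac{q^m}{\sqrt{\beta_m(\alpha)}}$ of Proposition \ref{GF}, I substitute it into the definition of $T_\alpha h_n^\alpha(q)$ and interchange the (real, scalar) summation with the integral; the orthonormality relation $\int_\R h_m^\alpha(x)h_n^\alpha(x)\,d\mu_\alpha(x)=\delta_{m,n}$ then collapses the series to the single surviving term $\frac{q^n}{\sqrt{\beta_n(\alpha)}}=\phi_n^\alpha(q)$. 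Since the $h_n^\alpha$ are real-valued, the complex-type conjugation in the inner product of $\mathcal{H}_\alpha$ is harmless here.

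Next I would handle right quaternionic linear combinations. For $\varphi=\sum_{n=0}^N h_n^\alpha c_n$ with $c_n\in\Hq$, the real-valuedness of the Hermite functions allows each coefficient $c_n$ to pass to the right of the kernel inside the integral, so that $T_\alpha\varphi=\sum_{n=0}^N \phi_n^\alpha c_n$. Invoking the orthonormality of $\{\phi_n^\alpha\}$ in $\CFH$ and of $\{h_n^\alpha\}$ in $\mathcal{H}_\alpha$ (as already recorded in the Remark preceding this theorem), this yields $\Vert T_\alpha\varphi\Vert_{\CFH}=\Vert\varphi\Vert_{\mathcal{H}_\alpha}$ on the dense subspace of finite combinations.

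Finally I would extend by continuity and deduce surjectivity. The pointwise estimate $\vert T_\alpha\varphi(q)\vert\le\sqrt{L_\alpha(\vert q\vert^2)}\,\Vert\varphi\Vert_{\mathcal{H}_\alpha}$ of Proposition \ref{estimation} ensures that if $\varphi_N\to\varphi$ in $\mathcal{H}_\alpha$ then $T_\alpha\varphi_N\to T_\alpha\varphi$ pointwise, while the isometry on finite sums forces $(T_\alpha\varphi_N)$ to be Cauchy, hence convergent, in $\CFH$; identifying the two limits shows that the isometry persists for all $\varphi\in\mathcal{H}_\alpha$. Surjectivity is then immediate, since the range of $T_\alpha$ is a closed subspace of $\CFH$ (an isometric image of a complete space) containing every $\phi_n^\alpha$, hence all of $\CFH$. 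The step I expect to be the main obstacle is this limiting argument: justifying the interchange of summation and integration and reconciling the pointwise limit from Proposition \ref{estimation} with the norm limit from the isometry, all while keeping the quaternionic coefficients correctly on the right throughout.
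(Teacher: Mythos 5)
Your proposal is correct and follows essentially the same route as the paper: isometry on finite right-linear combinations of the $h_n^\alpha$ (the paper records this, together with $T_\alpha h_n^\alpha=\phi_n^\alpha$, in the Remark preceding the theorem), extension by density using the pointwise estimate of Proposition \ref{estimation} to identify the $\CFH$-limit with $T_\alpha\varphi$, and surjectivity from the fact that the range contains the orthonormal basis $\{\phi_n^\alpha\}$. The only differences are cosmetic: you spell out the interchange-of-sum-and-integral proof of $T_\alpha h_n^\alpha=\phi_n^\alpha$ and the closed-range justification of surjectivity, both of which the paper leaves implicit.
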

\begin{proof}
Let $\varphi\in\mathcal{H}_\alpha$ and set $\displaystyle\varphi_N=\sum_{n=0}^N h_n^\alpha a_n$ with $(a_n)\subset\Hq$ such that $\varphi_N$ converges to $\varphi$ in $\mathcal{H}_\alpha$. Then, making use of the second estimate in Proposition \ref{estimation} we can show that $T_\alpha\varphi_N$ is a Cauchy sequence in the quaternionic Hilbert space $\CFH$. Thus, there exists $f\in\CFH$ such that $T_\alpha\varphi_N \underset{\CFH}\longrightarrow f$. Consequently, it will exist a subsequence $(T_\alpha\varphi_{N_k})_{N_k}$ converging to $f$ pointwise almost everywhere. Moreover, according to the Proposition \ref{estimation} we have the following $$\vert{T_\alpha\varphi(p)-T_\alpha\varphi_N(p)}\vert\leq C(\vert{p}\vert)\Vert{\varphi-\varphi_N}\Vert_{\mathcal{H}_\alpha}$$
Then, by letting $N$ goes to infinity we can see that $(T_\alpha\varphi_N)_{N}$ converges pointwise to $T_\alpha\varphi$. In particular, the pointwise convergence shows that  $T_\alpha\varphi=f$. However, by definition we have $ f:=\underset{N\rightarrow\infty, \CFH}\lim T_\alpha\varphi_N.$

Therefore, it follows that $$\Vert{T_\alpha\varphi}\Vert_{\CFH}=\Vert{f}\Vert_{\CFH}=\underset{N\rightarrow\infty}\lim\Vert{T_\alpha\varphi_N}\Vert_{\CFH}=\Vert{\varphi}\Vert_{\mathcal{H}_\alpha}.$$ Hence, $T_\alpha$ is a quaternionic isometric integral operator which is one-to-one. Moreover, since $T_{\alpha}h_{n}^{\alpha}=\phi_{n}^{\alpha}$ it is also surjective and this ends the proof.
\end{proof}
As a consequence we have
\begin{prop}\label{inv}
Let $f\in\CFH$ and $x\in\R$. Then, for any imaginary unit $I\in\mathbb{S}$ we have $$\displaystyle T^{-1}_\alpha f(x)=\int_{\C_I}\mathcal{C}_{\alpha,I}^e(\overline{p},x)f^I_e(p)d\lambda_{\alpha,I}(p)+2(\alpha+1)\int_{\C_I}\mathcal{C}^o_{\alpha,I}(\overline{p},x)f^I_o(p)\vert{p}\vert^{-2}d\lambda_{\alpha+1,I}(p).$$
\end{prop}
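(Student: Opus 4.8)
The plan is to exploit the previous theorem, which shows that $T_\alpha$ is a unitary isomorphism sending the orthonormal basis $\{h_n^\alpha\}_n$ of $\mathcal{H}_\alpha$ onto the orthonormal basis $\{\phi_n^\alpha\}_n$ of $\CFH$; in particular $T_\alpha^{-1}\phi_n^\alpha=h_n^\alpha$ for every $n$. First I would take $f\in\CFH$ and expand it as $f=\sum_{n=0}^\infty\phi_n^\alpha c_n$, where by unitarity the coefficients are $c_n=\scal{f,\phi_n^\alpha}_{\CFH}=\scal{T_\alpha^{-1}f,h_n^\alpha}_{\mathcal{H}_\alpha}$. Applying the (right-linear, continuous) operator $T_\alpha^{-1}$ termwise gives $T_\alpha^{-1}f=\sum_{n=0}^\infty h_n^\alpha c_n$ in $\mathcal{H}_\alpha$, so the whole problem reduces to rewriting the scalars $c_n$ as the two integrals appearing in the statement.

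Next I would compute $c_n$ from the definition of the inner product of $\CFH$. Since $\phi_n^\alpha(q)=q^n/\sqrt{\beta_n(\alpha)}$ is purely even when $n$ is even and purely odd when $n$ is odd, exactly one of the two integrals defining $\scal{f,\phi_n^\alpha}_{\CFH}$ survives in each case: for $n$ even one gets $c_n=\int_{\C_I}\overline{\phi_n^\alpha(p)}f^e_I(p)\,d\lambda_{\alpha,I}(p)$, and for $n$ odd one gets $c_n=2(\alpha+1)\int_{\C_I}\overline{\phi_n^\alpha(p)}f^o_I(p)|p|^{-2}\,d\lambda_{\alpha+1,I}(p)$. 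Substituting these into $T_\alpha^{-1}f(x)=\sum_n h_n^\alpha(x)c_n$ and separating the sum into its even- and odd-indexed parts produces two sums, each of the form $\sum h_n^\alpha(x)\overline{\phi_n^\alpha(p)}$ integrated against $f^e_I$ or $f^o_I$. The key identification is then the kernel: using $\overline{\phi_n^\alpha(p)}=\phi_n^\alpha(\overline{p})$ (because $\overline{p^n}=\overline{p}^{\,n}$ and $\beta_n(\alpha)$ is real) together with Proposition \ref{GF}, the even-indexed sum $\sum_{n\text{ even}}h_n^\alpha(x)\overline{\phi_n^\alpha(p)}$ is precisely $\mathcal{C}_{\alpha,I}^e(\overline{p},x)$, the even part of the kernel $\mathcal{C}_\alpha$ evaluated at $\overline{p}$, and similarly the odd-indexed sum is $\mathcal{C}_{\alpha,I}^o(\overline{p},x)$. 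This reproduces the two integrals in the statement, and their independence of the chosen imaginary unit $I$ follows exactly as before from Proposition \ref{scal}.

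The one step that requires genuine justification is the interchange of summation and integration used to pull the series $\sum_n h_n^\alpha(x)\overline{\phi_n^\alpha(p)}$ inside the integrals; this I expect to be the main obstacle. I would handle it either by a dominated-convergence argument based on the $L^2$ estimates of Proposition \ref{estimation} together with the square-summability $\sum_n|c_n|^2=\norm{f}_{\CFH}^2<\infty$, or, more cleanly, by avoiding the interchange altogether. In the latter route I would let $S$ denote the right-hand side of the claimed formula, verify directly on the basis that $S\phi_m^\alpha=h_m^\alpha$ — here the orthogonality relations of Lemma \ref{comp} collapse the surviving integral to $\delta_{m,n}$ and single out the correct term of the kernel expansion — and then extend the identity $S=T_\alpha^{-1}$ to all of $\CFH$ by linearity and continuity. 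Either way, once the termwise manipulation is legitimized the statement follows immediately from the basis computation.
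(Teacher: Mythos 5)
Your proof is correct in overall strategy but takes a genuinely different route from the paper's. The paper never expands $f$ in the orthonormal basis: it observes that $T_\alpha$, being a surjective isometry, is unitary, so that $T_\alpha^{-1}=T_\alpha^{*}$, and then computes the adjoint directly. For arbitrary $g\in\mathcal{H}_\alpha$ it writes $\scal{T_\alpha^{-1}f,g}_{\mathcal{H}_\alpha}=\scal{f,T_\alpha g}_{\CFH}$, substitutes the integral defining $T_\alpha g$ (split into its even and odd parts) into the $\CFH$-inner product, interchanges the $p$- and $x$-integrations by Fubini's theorem, and recognizes the result as $\scal{\psi_f,g}_{\mathcal{H}_\alpha}$, where $\psi_f$ denotes the right-hand side of the statement; since $g$ is arbitrary, $T_\alpha^{-1}f=\psi_f$. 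You instead work in coordinates: $T_\alpha^{-1}\phi_n^\alpha=h_n^\alpha$, the coefficients $c_n=\scal{f,\phi_n^\alpha}_{\CFH}$ are rewritten as the even/odd integrals, and the kernel is reassembled through the generating-function identity of Proposition \ref{GF}. Both arguments rest on the unitarity theorem, and both hinge on a single interchange of limiting operations (sum versus integral for you, Fubini for the paper). Yours has the merit of making visible why $\mathcal{C}_\alpha(\overline{p},x)$ appears --- it is the generating function of the family $h_n^\alpha(x)\overline{\phi_n^\alpha(p)}$ --- and of exhibiting the formula as the adjoint relation written out in the two bases, while the paper's weak formulation against a test element $g$ avoids any discussion of pointwise convergence of series.

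That said, neither of the two justifications you offer for the interchange survives scrutiny, and the obstruction is worth naming. Formally $\psi_f(x)=\scal{f,\mathcal{C}_\alpha(\cdot,x)}_{\CFH}$, but the entire slice regular function $p\mapsto\mathcal{C}_\alpha(p,x)$ does \emph{not} belong to $\CFH$: by the growth criterion its squared norm would be $\sum_n h_n^\alpha(x)^2$, and this series diverges. (For $\alpha=-\tfrac{1}{2}$ one can check this directly: $\vert\mathcal{C}_{-1/2}(z,x)\vert^{2}e^{-\vert z\vert^{2}}$ is independent of $\mathrm{Im}\,z$, so its integral over $\C_I$ is infinite; equivalently, the classical Hermite functions satisfy $\sum_n h_n(x)^2=\infty$.) Consequently the functional $f\mapsto\psi_f(x)$ is unbounded on $\CFH$, and this is fatal to both of your repairs. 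The dominated-convergence route fails because $(h_n^\alpha(x))_n\notin\ell^2$ implies, by a standard duality argument, that there exist $(c_n)\in\ell^2$ --- that is, functions $f\in\CFH$ --- with $\sum_n\vert h_n^\alpha(x)\vert\,\vert c_n\vert=\infty$, so no summable dominant exists. The route ``verify on the basis, then extend by continuity'' is circular, since the continuity of $S$ at a fixed $x$ is precisely the property that fails. In fairness, the paper's own appeal to Fubini requires an absolute integrability that it does not verify and that is delicate for exactly the same reason, so your proposal is at a comparable level of rigor to the published proof. A clean fix, in either approach, is to establish the formula for $f$ in the dense subspace of right polynomials (where your computation is a finite sum and every integral converges absolutely), and then to read the proposition as an identity in $\mathcal{H}_\alpha$, both sides being $\mathcal{H}_\alpha$-limits of the polynomial case --- or, equivalently, to interpret the integrals as improper limits over the balls $\lbrace \vert p\vert<R\rbrace$ as $R\to\infty$ --- rather than as a pointwise formula with absolutely convergent integrals.
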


\begin{proof}
First, note that $T_\alpha$ is a surjective isometry. Then, it defines a quaternionic unitary operator which means that its inverse is given by $T_\alpha^{-1}=T_\alpha^*$ where $T_\alpha^*$ is the adjoint operator of $T_\alpha$. 
Now, let $f\in\CFH$, then since $T^{-1}_\alpha=T_\alpha^*$ we have $$\scal{T_\alpha^{-1}f,g}_{\mathcal{H}_\alpha}=\scal{f,T_\alpha g}_{\CFH}\quad \forall g\in\mathcal{H}_\alpha.$$
Observe that $T_\alpha g$ is given by $$T_\alpha g(q)=\displaystyle\int_\R\mathcal{C}_\alpha(q,x)g(x)d\mu_\alpha(x).$$ Then, we have 
\begin{align*}
\displaystyle \scal{T_\alpha^{-1}f,g}_{\mathcal{H}_\alpha}
 &=\scal{f,T_\alpha g}_{\CFH}\quad \forall g\in\mathcal{H}_\alpha.
 \\& = \int_{\C_I}\overline{(T_\alpha g)^e_I(p)}f^e_I(p)d\lambda_{\alpha,I}(p)+2(\alpha+1)\int_{\C_I}\overline{(T_\alpha g)^o_I(p)}f^o_I(p)\vert{p}\vert^{-2}d\lambda_{\alpha+1,I}(p)
 \\
 \end{align*}
However, we have $$(T_\alpha g)^e_I(p)=\displaystyle\int_\R\mathcal{C}_\alpha^e(p,x)g(x)d\mu_\alpha(x)\text{ and } (T_\alpha g)^o_I(p)=\displaystyle\int_\R\mathcal{C}_\alpha^o(p,x)g(x)d\mu_\alpha(x).$$
 Thus, making use of Fubini's theorem we get that for any $g\in\mathcal{H}_\alpha$ we have
 \begin{align*}
\displaystyle \scal{T_\alpha^{-1}f,g}_{\mathcal{H}_\alpha}
 &=\int_\R\overline{g(x)}\psi_f(x)d\mu_\alpha(x)
 \\& =\scal{\psi_f,g}_{\mathcal{H}_\alpha},
 \\
 \end{align*}

where we have set $$\psi_f(x)=\displaystyle\int_{\C_I}\mathcal{C}_{\alpha,I}^e(\overline{p},x)f^I_e(p)d\lambda_{\alpha,I}(p)+2(\alpha+1)\int_{\C_I}\mathcal{C}^o_{\alpha,I}(\overline{p},x)f^I_o(p)\vert{p}\vert^{-2}d\lambda_{\alpha+1,I}(p).$$
Since the last equality holds for all $g\in\CFH$ it follows that $T^{-1}_\alpha f=\psi_f.$
\end{proof}
\begin{rem}
The integral on the right hand side in Proposition \ref{inv} does not depend on the choice of the imaginary unit since the scalar product does not depend on the choice of the slice.
\end{rem}
We finish this section by connecting this unitary transform $T_\alpha$ to what we call the (right) slice Dunkl transform. Indeed, we define the (right) slice Dunkl transform of a function $\varphi\in \mathcal{H}_\alpha$ with respect to a slice $\C_I$ to be $$\displaystyle D_\alpha^I\varphi(x):=\int_\R L_\alpha(-Ixt)\varphi(t)d\mu_\alpha(t)$$
More properties of the classical complex Dunkl transform can be found for example in \cite{Soltani2015}. Actually, this transform generalizes the classical Fourier transform on the real line. It satisfies a version of the Plancherel theorem since it extends uniquely to a unitary operator from the Hilbert space $\mathcal{H}_\alpha$ onto itself.
Then, we prove
\begin{lem}\label{Dunkl}
Let $I$ be any imaginary unit in $\mathbb{S}$ and $\psi\in\mathcal{H}_\alpha$. Then,  $$T_\alpha D_\alpha^I\psi(x)=T_\alpha\psi\circ g(x)\quad \text{where}\quad g(x)=-xI \quad \text{for all } x\in\R.$$
\end{lem}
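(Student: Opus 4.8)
The plan is to diagonalise the slice Dunkl transform in the orthonormal basis $\{h_n^\alpha\}_{n\ge 0}$ of $\mathcal{H}_\alpha$ and then transport the computation through the isometry $T_\alpha$. The engine of the argument is the eigenfunction relation
$$D_\alpha^I h_n^\alpha=(-I)^n\,h_n^\alpha,\qquad n\in\N .$$
To obtain it I would restrict $D_\alpha^I$ to the slice $\C_I$: since each $h_n^\alpha$ is real valued and the kernel $L_\alpha(-Ixt)$ takes values in $\C_I$, the integral defining $D_\alpha^I h_n^\alpha$ stays inside $\C_I$, where it coincides with the classical complex Dunkl transform of the generalised Hermite functions. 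That these are eigenfunctions (with eigenvalue $(-i)^n$ in the complex model) is part of the theory recalled before, see \cite{Rosenblum1994,Rosler1998,SifiSoltani2002}; translating $i\mapsto I$ yields the displayed relation. As a check, for $n=0$ the transform fixes the Gaussian $h_0^\alpha$, consistently with $(-I)^0=1$.

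Granting this, the rest is bookkeeping, the only care being with noncommutativity. Write $\psi=\sum_{n\ge 0}h_n^\alpha c_n$, $c_n\in\Hq$, convergent in $\mathcal{H}_\alpha$. Since $D_\alpha^I$ is unitary, hence bounded and right $\Hq$-linear, and since $h_n^\alpha$ is real (so commutes with $(-I)^n$),
$$D_\alpha^I\psi=\sum_{n\ge 0}\bigl(D_\alpha^I h_n^\alpha\bigr)c_n=\sum_{n\ge 0}h_n^\alpha\,(-I)^n c_n$$
in $\mathcal{H}_\alpha$. Applying the isometry $T_\alpha$ together with $T_\alpha h_n^\alpha=\phi_n^\alpha=q^n/\sqrt{\beta_n(\alpha)}$ and its right linearity gives, with convergence in $\CFH$,
$$T_\alpha D_\alpha^I\psi(q)=\sum_{n\ge 0}\frac{q^n}{\sqrt{\beta_n(\alpha)}}(-I)^n c_n .$$
By Proposition \ref{estimate2} the evaluation functionals on $\CFH$ are continuous, so this identity holds pointwise; evaluating at a real point $q=x$ gives $\sum_{n\ge 0}x^n(-I)^n c_n/\sqrt{\beta_n(\alpha)}$.

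On the other side $T_\alpha\psi(q)=\sum_{n\ge 0}q^n c_n/\sqrt{\beta_n(\alpha)}$, and composing with $g(x)=-xI$ while using that $x$ is real, so $(-xI)^n=x^n(-I)^n$, gives
$$(T_\alpha\psi\circ g)(x)=\sum_{n\ge 0}\frac{(-xI)^n}{\sqrt{\beta_n(\alpha)}}c_n=\sum_{n\ge 0}\frac{x^n(-I)^n}{\sqrt{\beta_n(\alpha)}}c_n ,$$
which is exactly the expression just obtained for $T_\alpha D_\alpha^I\psi(x)$. This proves the lemma.

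The main obstacle is the eigenfunction relation: fixing the eigenvalue $(-I)^n$ with the sign and normalisation compatible with the conventions for $L_\alpha$ and $D_\alpha^I$, and verifying that the slice Dunkl transform really restricts to the classical one on $\C_I$. A self-contained alternative meets the same difficulty: by Fubini one reduces the statement to the kernel identity $\int_\R \mathcal{C}_\alpha(p,t)\,L_\alpha(-Ixt)\,d\mu_\alpha(t)=\mathcal{C}_\alpha(-pI,x)$, and expanding $\mathcal{C}_\alpha(p,\cdot)$ via Proposition \ref{GF} and integrating term by term turns this into precisely the same eigenfunction computation, so the analytic heart (a Gaussian--Bessel integral, equivalently the cited classical result) cannot be bypassed.
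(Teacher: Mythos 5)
Your proof is correct, but it takes a genuinely different route from the paper's. The paper works at the level of kernels: it writes $T_\alpha D_\alpha^I\psi(x)$ as a double integral, applies Fubini, and reduces the whole lemma to the single identity $\int_\R \mathcal{C}_\alpha(x,s)L_\alpha(-Ist)\,d\mu_\alpha(s)=\mathcal{C}_\alpha(-Ix,t)$ for $x\in\R$, which it evaluates by citing Theorem 3.4 of \cite{Soltani2015} --- exactly the Gaussian--Bessel integral you flag at the end as unavoidable. You instead diagonalise: you invoke the eigenfunction relation $D_\alpha^I h_n^\alpha=(-I)^n h_n^\alpha$, expand $\psi$ in the orthonormal basis $\{h_n^\alpha\}$, and transport everything through $T_\alpha$ using $T_\alpha h_n^\alpha=\phi_n^\alpha$, right $\Hq$-linearity, boundedness, and continuity of point evaluations (Proposition \ref{estimate2}). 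The two external inputs are equivalent, as you correctly observe: expanding the paper's kernel identity in the basis $\{h_n^\alpha\}$ via Proposition \ref{GF} is precisely your eigenvalue computation. What your route buys: it recycles machinery the paper has already established (the remark $T_\alpha h_n^\alpha=\phi_n^\alpha$, unitarity of $D_\alpha^I$, continuity of evaluations) and replaces the Fubini interchange for a general $\psi\in\mathcal{H}_\alpha$ by an interchange of bounded operators with a norm-convergent series, which is cleaner to justify. What the paper's route buys: it needs no spectral information about $D_\alpha^I$ at all, only the pointwise integral formula, and the result it cites is stated for exactly the normalisations in play. One caution about your presentation: the eigenfunction relation is \emph{not} among the facts the paper recalls (only the generating function and the orthonormal-basis property of $\{h_n^\alpha\}$ are), so it must genuinely be imported from \cite{Rosenblum1994,Rosler1998,SifiSoltani2002} with attention to the sign convention in the kernel $L_\alpha(-Ixt)$; your identification of $\C_I$ with $\C$ for the real-valued functions $h_n^\alpha$, together with the sanity check at $n=0$, handles this correctly, and the quaternionic bookkeeping (real scalars commuting with $(-I)^n$, $(-xI)^n=x^n(-I)^n$ for $x\in\R$) is sound.
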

\begin{proof}
Let $x\in\R$ and $\psi\in\mathcal{H}_\alpha$, and consider the function $$\varphi(s)=D_\alpha^I\psi(s):=\int_\R L_\alpha(-Ist)\psi(t)d\mu_\alpha(t)$$
Then, thanks to the Plancherel and Fubini's theorems $\varphi\in\mathcal{H}_\alpha$ we can write
\begin{align*}\displaystyle
T_\alpha\varphi(x)
 &= \int_\R \mathcal{C}_\alpha(x,s)\varphi(s)d\mu_\alpha(s)
 \\& =\int_\R \mathcal{C}_\alpha(x,s)\left(\int_\R L_\alpha(-Ist)\psi(t)d\mu_\alpha(t)\right) d\mu_\alpha(s)
 \\& = \int_\R \left(\int_\R\mathcal{C}_\alpha(x,s)L_\alpha(-Ist)d\mu_\alpha(s)\right)\psi(t) d\mu_\alpha(t)
 \end{align*}
 Note that, we have $$\displaystyle \phi(x,t):=\int_\R\mathcal{C}_\alpha(x,s)L_\alpha(-Ist)d\mu_\alpha(s).$$ Thus, we get $$\displaystyle \phi(x,t):=2^{\frac{\alpha+1}{2}}e^{-\frac{x^2}{2}}\int_\R e^{-\frac{s^2}{2}}L_\alpha(\sqrt{2}xs)L_\alpha(-Ist)d\mu_\alpha(s).$$ The last integral can be evaluated as in Theorem 3.4 in \cite{Soltani2015} since $x\in\R$. Then, we get $$\displaystyle \phi(x,t)=\mathcal{C}_\alpha(-Ix,t).$$
 Therefore, we obtain $$T_\alpha D_\alpha^I\psi(x)=\int_\R \mathcal{C}_\alpha(-Ix,t)\psi(t) d\mu_\alpha(t).$$
 Finally, this shows that $$T_\alpha D_\alpha^I\psi(x)=T_\alpha\psi(-xI).$$
\end{proof}
As a consequence of the Lemma \ref{Dunkl} we have the following :
\begin{prop}
For any $I\in\mathbb{S}$ and $f\in\CFH$ we have $$T_\alpha D^I_\alpha T^{-1}_\alpha(f)(x)=f(-xI) \quad \forall x\in\R.$$
\end{prop}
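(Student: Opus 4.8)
The plan is to deduce the identity as an immediate corollary of Lemma~\ref{Dunkl}, using the fact, established in the isomorphism theorem above, that $T_\alpha$ is an isometric isomorphism of $\mathcal{H}_\alpha$ onto $\CFH$ and is therefore invertible with $T_\alpha T_\alpha^{-1}=\mathrm{Id}$ on $\CFH$. First I would fix $f\in\CFH$ and introduce the auxiliary function $\psi:=T_\alpha^{-1}(f)$. Because $T_\alpha$ maps $\mathcal{H}_\alpha$ bijectively onto $\CFH$, this $\psi$ is a genuine element of $\mathcal{H}_\alpha$, so that the hypotheses of Lemma~\ref{Dunkl} are satisfied, and moreover $T_\alpha\psi=T_\alpha T_\alpha^{-1}(f)=f$.

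The second step is then purely formal: applying Lemma~\ref{Dunkl} to $\psi$ gives, for every $x\in\R$,
$$
T_\alpha D_\alpha^I\psi(x)=T_\alpha\psi(-xI).
$$
Substituting $\psi=T_\alpha^{-1}(f)$ on the left and $T_\alpha\psi=f$ on the right turns this into
$$
T_\alpha D_\alpha^I T_\alpha^{-1}(f)(x)=f(-xI),
$$
which is exactly the asserted formula. I would note that $-xI$ lies in the slice $\C_I\subset\Hq$ and that $f$ is entire slice regular, so that the right-hand side $f(-xI)$ is well defined, and that $T_\alpha T_\alpha^{-1}(f)=f$ as a function on all of $\Hq$, in particular at the point $-xI$.

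Since the whole argument is a one-line reduction to the preceding lemma, there is essentially no analytic obstacle here. The only point requiring care is the verification that $\psi=T_\alpha^{-1}(f)\in\mathcal{H}_\alpha$, which legitimizes the use of Lemma~\ref{Dunkl}; this is precisely what the surjectivity and injectivity of $T_\alpha$ provide. All the delicate Plancherel-type manipulations with the slice Dunkl transform, together with the kernel computation $\phi(x,t)=\mathcal{C}_\alpha(-Ix,t)$, have already been carried out in the proof of Lemma~\ref{Dunkl}, so nothing further is needed.
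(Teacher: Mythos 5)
Your proposal is correct and follows exactly the paper's own argument: set $\psi=T_\alpha^{-1}(f)\in\mathcal{H}_\alpha$ and apply Lemma~\ref{Dunkl}, using $T_\alpha T_\alpha^{-1}=\mathrm{Id}$ on $\CFH$. The additional remarks on the well-definedness of $f(-xI)$ and on why $\psi$ lies in $\mathcal{H}_\alpha$ are sound but not substantively different from what the paper does.
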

\begin{proof}
We just have to take $\psi=T^{-1}_\alpha f\in\mathcal{H}_\alpha$ and then apply the Lemma \ref{Dunkl}.
\end{proof}
\section{Some quaternionic operators on $\CFH$}

In this section, we shall consider the two following  operators on the slice Cholewinski-Fock space defined by $$\displaystyle \MS f(q)=qf(q)\quad \text{and} \quad \DS f(q)=\partial_Sf(q)+(2\alpha+1)q^{-1}f^o(q)$$ with domains given respectively by $$D(\MS)=\lbrace{f\in\CFH; \MS f\in\CFH}\rbrace$$   and  $$D(\DS)=\lbrace{f\in\CFH; \DS f\in\CFH}\rbrace.$$
Note that $\MS$ and $\DS$ are quaternionic right linear operators densely defined on $\CFH$ since $\lbrace{\phi_n^\alpha}\rbrace_{n\in\N}$ is an orthonormal basis of this quaternionic Hilbert space. \\ \\ In the sequel, we present some properties of these right quaternionic operators on $\CFH$
\begin{prop}
$\MS$ and $\DS$ are two closed quaternionic operators on $\CFH$.
\end{prop}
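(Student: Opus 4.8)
The plan is to verify closedness directly from the definition: for $T\in\{\MS,\DS\}$ I must show that whenever a sequence $(f_k)_k\subset D(T)$ satisfies $f_k\to f$ and $Tf_k\to g$ in $\CFH$, then $f\in D(T)$ and $Tf=g$. The whole argument rests on one elementary observation. Since $\{\phi_n^\alpha\}_n$ is an orthonormal basis and, by Proposition \ref{scal}, $\scal{f,\phi_n^\alpha}_{\CFH}=a_n\sqrt{\beta_n(\alpha)}$ for $f(q)=\sum_n q^na_n$, convergence in the norm of $\CFH$ forces convergence of every single Taylor coefficient: writing $f_k(q)=\sum_n q^na_n^{(k)}$ one has $|a_n^{(k)}-a_n|\sqrt{\beta_n(\alpha)}=|\scal{f_k-f,\phi_n^\alpha}_{\CFH}|\le\norm{f_k-f}_{\CFH}\to0$. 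Both operators will be shown to act on $\CFH$ as explicit (weighted) shifts of the coefficient sequence, after which closedness becomes a bare uniqueness-of-limits statement.

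First I would treat $\MS$. Writing $f(q)=\sum_n q^na_n$, one has $\MS f(q)=qf(q)=\sum_{k\ge1}q^ka_{k-1}$, so the $k$-th coefficient of $\MS f$ is $a_{k-1}$ (equivalently $\MS\phi_n^\alpha=\sqrt{\beta_{n+1}(\alpha)/\beta_n(\alpha)}\,\phi_{n+1}^\alpha$). Now let $f_k\to f$ and $\MS f_k\to g$ in $\CFH$, and denote by $b_j$ the coefficients of $g$. By the observation above the $j$-th coefficient of $\MS f_k$, namely $a_{j-1}^{(k)}$, converges both to $a_{j-1}$ (since $f_k\to f$) and to $b_j$ (since $\MS f_k\to g$); uniqueness of limits gives $b_j=a_{j-1}$ for every $j$. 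Hence $g$ and $\MS f$ have the same power series, so $\MS f=g$; as $g\in\CFH$ this yields $\MS f\in\CFH$, i.e. $f\in D(\MS)$ and $\MS f=g$. Thus $\MS$ is closed.

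For $\DS$ the only extra work is to pin down its coefficient action. Term-by-term differentiation gives $\partial_Sf(q)=\sum_{m\ge0}(m+1)q^ma_{m+1}$, while $q^{-1}f^o(q)=\sum_{m\ge0}q^{2m}a_{2m+1}$ is again an entire slice regular function because $f^o$ contains only odd powers. Adding the two contributions, the $m$-th coefficient of $\DS f$ equals $c_ma_{m+1}$ with $c_m=m+1$ for $m$ odd and $c_m=m+2\alpha+2$ for $m$ even; equivalently $\DS$ is the weighted backward shift $\DS\phi_n^\alpha=c_{n-1}\sqrt{\beta_{n-1}(\alpha)/\beta_n(\alpha)}\,\phi_{n-1}^\alpha$. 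With this identity the argument is identical to the one for $\MS$: if $f_k\to f$ and $\DS f_k\to g$, comparing the $m$-th coefficients of $\DS f_k$, which converge simultaneously to $c_ma_{m+1}$ and to the $m$-th coefficient of $g$, forces $g=\DS f$; since $g\in\CFH$ we conclude $f\in D(\DS)$ and $\DS f=g$, so $\DS$ is closed as well.

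The one genuinely non-formal step, and hence the main thing to get right, is the explicit coefficient description of $\DS$: one must check that $q^{-1}f^o$ is entire and that the slice derivative and this correction term combine into a single diagonal-in-coefficients (shift) operator, so that the limit $g$ can only be $\DS f$. Everything else is the passage from norm convergence to coefficientwise convergence, which is immediate from the orthonormal basis and Proposition \ref{scal}. I note in passing that one could instead establish $\DS=\MS^{*}$ and invoke that the adjoint of a densely defined operator is automatically closed, but since that identity is proved only later I prefer the self-contained coefficient argument above.
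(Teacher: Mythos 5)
Your proof is correct, but it takes a genuinely different route from the paper's. The paper deduces closedness from the continuity of point evaluations: by the estimate of Proposition \ref{estimate2}, norm convergence in $\CFH$ implies pointwise convergence, so if $f_k\to f$ and $\MS f_k\to g$ in norm, then $\MS f_k(q)=qf_k(q)\to qf(q)$ pointwise, and uniqueness of pointwise limits gives $g=\MS f$; the paper then asserts that the ``same technique'' handles $\DS$. You instead pass from norm convergence to coefficientwise convergence via the orthonormal basis $\{\phi_n^\alpha\}_n$ and Proposition \ref{scal}, and identify both operators as weighted shifts on the coefficient sequence. The two mechanisms are parallel (continuity of evaluation functionals versus continuity of coefficient functionals), but yours has a concrete advantage for $\DS$: in the pointwise argument, identifying the limit of $\DS f_k$ requires knowing that $\partial_S f_k\to\partial_S f$ pointwise, which does not follow from pointwise convergence of $f_k$ alone; one needs locally uniform convergence (available because $C(\vert q\vert)\le e^{\vert q\vert^2/2}$ is locally bounded) together with a Weierstrass-type theorem on each slice, a step the paper leaves implicit. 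Your coefficient description of $\DS$ (weights $c_m=m+1$ for $m$ odd and $c_m=m+2\alpha+2$ for $m$ even) is exactly the computation the paper carries out later when proving $D(\MS)=D(\DS)$, so your argument is self-contained, treats both operators uniformly, and merely front-loads work the paper needs anyway; the price is that it is tied to the series expansion, whereas the paper's evaluation argument is shorter for $\MS$ and would apply verbatim to any multiplication-type operator.
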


\begin{proof}
We consider the graph of $\MS$ defined by $$\mathcal{G}(\MS):=\lbrace{(f,\MS f); f\in D(\MS)}\rbrace.$$  Let us show that $\mathcal{G}(\MS)$ is closed. Indeed, let $\phi_n$ be a sequence in $D(\MS)$ such that $\phi_n$ and $\MS\phi_n$ converge to $\phi$ and $\psi$ respectively on $\CFH$. Then, thanks to the Proposition \ref{estimation} we have $$\vert{\phi_n(q)-\phi(q)}\vert\leq C_q \Vert{\phi_n-\phi}\Vert\quad \text{and} \quad \vert{\MS\phi_n(q)-\psi(q)}\vert\leq C_q \Vert{\MS\phi_n-\psi}\Vert; $$
it follows that $\phi_n$ and $\MS\phi_n$ converge respectively to $\phi$ and $\psi$ pointwise. This leads to $\psi(q)=\MS\phi(q)$ which ends the proof. The same technique could be adopted to prove the closeness of the operator $\DS$ on $\CFH$.
\end{proof}

\begin{prop}
Let $f\in\CFH$. Then, $\MS f\in\CFH$ if and only if $\DS f\in\CFH$. In particular this means that $D(\MS)=D(\DS)$.
\end{prop}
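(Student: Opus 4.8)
The plan is to reduce the whole statement to the coefficient characterization from the Corollary, namely that an entire slice regular $f(q)=\sum_n q^n a_n$ lies in $\CFH$ precisely when $\sum_n \vert a_n\vert^2\beta_n(\alpha)<\infty$, equivalently when its coordinates $c_n$ in the orthonormal basis $\{\phi_n^\alpha\}$ are square summable. Both $\MS f(q)=qf(q)$ and $\DS f(q)=\partial_S f(q)+(2\alpha+1)q^{-1}f^o(q)$ are again entire slice regular (the first because $q\mapsto q$ is quaternionic intrinsic, the second because $q^{-1}f^o(q)=\sum_n q^{2n}a_{2n+1}$ is an even power series), so the Corollary applies to each of them. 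Thus the claim becomes a comparison of two weighted $\ell^2$ conditions on $(c_n)$, and the equivalence will follow once the two weight sequences are shown to be comparable.

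First I would compute the action of the two operators on the basis. Since $q\cdot q^n=q^{n+1}$ one gets $\MS\phi_n^\alpha=\sqrt{\beta_{n+1}(\alpha)/\beta_n(\alpha)}\,\phi_{n+1}^\alpha$. For $\DS$, using $\partial_S q^n=nq^{n-1}$ together with $(q^n)^o=q^n$ for odd $n$ and $(q^n)^o=0$ for even $n$, one finds $\DS q^n=\gamma_n q^{n-1}$ with $\gamma_n=n$ when $n$ is even and $\gamma_n=n+2\alpha+1$ when $n$ is odd, whence $\DS\phi_n^\alpha=\gamma_n\sqrt{\beta_{n-1}(\alpha)/\beta_n(\alpha)}\,\phi_{n-1}^\alpha$ and $\DS\phi_0^\alpha=0$. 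As both operators are right-linear and map basis vectors to scalar multiples of neighbouring basis vectors, for $f=\sum_n\phi_n^\alpha c_n\in\CFH$ orthonormality gives
$$\norm{\MS f}_{\CFH}^2=\sum_{n\geq0}\frac{\beta_{n+1}(\alpha)}{\beta_n(\alpha)}\vert c_n\vert^2,\qquad \norm{\DS f}_{\CFH}^2=\sum_{n\geq1}\gamma_n^2\,\frac{\beta_{n-1}(\alpha)}{\beta_n(\alpha)}\vert c_n\vert^2,$$
so that $\MS f\in\CFH$ (resp. $\DS f\in\CFH$) is exactly the finiteness of the first (resp. second) series.

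The crucial step, and the one demanding the most care, is the explicit evaluation and comparison of the two weight sequences. Splitting into $n=2k$ and $n=2k+1$ and inserting $\beta_{2k}(\alpha)=2^{2k}k!\,\Gamma(k+\alpha+1)/\Gamma(\alpha+1)$ and $\beta_{2k+1}(\alpha)=2^{2k+1}k!\,\Gamma(k+\alpha+2)/\Gamma(\alpha+1)$, the adjacent ratios telescope: one obtains $\beta_{n+1}/\beta_n=2(k+\alpha+1)$ for $n=2k$ and $2(k+1)$ for $n=2k+1$, while $\gamma_n^2\beta_{n-1}/\beta_n=2k$ for $n=2k$ and $2(k+\alpha+1)$ for $n=2k+1$. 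Hence the pointwise ratio of the $\DS$-weight to the $\MS$-weight equals $k/(k+\alpha+1)$ in the even case and $(k+\alpha+1)/(k+1)$ in the odd case.

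Finally I would note that, because $\alpha\geq-\tfrac12$, both ratios are strictly positive, bounded above and below by positive constants, and tend to $1$ as $n\to\infty$; thus the two weight sequences are comparable for all $n\geq1$. Consequently the two series above converge simultaneously, the only discrepancy being the single term $\tfrac{\beta_1(\alpha)}{\beta_0(\alpha)}\vert c_0\vert^2$ present in $\norm{\MS f}_{\CFH}^2$ but absent from $\norm{\DS f}_{\CFH}^2$; since $f\in\CFH$ forces $\vert c_0\vert<\infty$, this term is a harmless finite constant. This yields $\MS f\in\CFH\iff\DS f\in\CFH$, and in particular $D(\MS)=D(\DS)$. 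The main obstacle is purely bookkeeping, namely keeping the even/odd forms of $\beta_n$ and $\gamma_n$ straight; no analytic subtlety beyond the comparability of the weights is required.
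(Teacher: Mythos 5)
Your proof is correct and follows essentially the same route as the paper: both arguments compute the action of $\MS$ and $\DS$ on the power-series coefficients (equivalently on the basis $\{\phi_n^\alpha\}$), arrive at the same weights $\beta_{n+1}(\alpha)/\beta_n(\alpha)$ and $\gamma_n^2\beta_{n-1}(\alpha)/\beta_n(\alpha)=\beta_n(\alpha)/\beta_{n-1}(\alpha)$, and compare the two resulting weighted $\ell^2$ sums. The only (cosmetic) difference is in the last step: the paper concludes via the exact identity $\Vert{\DS f}\Vert^2=\Vert{\MS f}\Vert^2-\Vert{f}\Vert^2-(2\alpha+1)\sum_{k}(-1)^k\beta_k(\alpha)\vert{a_k}\vert^2$, whose extra terms are finite because $f\in\CFH$, whereas you conclude via two-sided comparability of the weights for $n\geq 1$ together with the harmless $n=0$ term.
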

\begin{proof}
Let $\displaystyle f(q)=\sum_{n=0}^\infty q^na_n$ be an entire slice regular function belonging to $\CFH$; we shall compute $\Vert{\MS f}\Vert$ and $\Vert{\DS f}\Vert$. We have $$\displaystyle\MS f(q)=\sum_{n=0}^\infty q^{n+1}a_{n} \quad \text{and} \quad \Vert{\MS f}\Vert^2=\sum_{n=0}^\infty\beta_{n+1}(\alpha)\vert{a_{n}}\vert^2.$$ On the other hand,
\begin{align*}\displaystyle
\DS f(q)&= \sum_{k=1}^\infty 2k q^{2k-1}a_{2k}+\sum_{k=1}^\infty 2(\alpha+k+1)q^{2k}a_{2k+1}
 \\& =\sum_{k=1}^\infty \frac{\beta_{2k}(\alpha)}{\beta_{2k-1}(\alpha)}q^{2k-1}a_{2k}+\sum_{k=1}^\infty \frac{\beta_{2k+1}(\alpha)}{\beta_{2k}(\alpha)}q^{2k}a_{2k+1}
 \\& = \sum_{n=1}^\infty \frac{\beta_{n}(\alpha)}{\beta_{n-1}(\alpha)}q^{n-1}a_{n}
 \end{align*}
Thus we have  $$ \DS f(q)= \sum_{n=0}^\infty q^nc_n\quad \text{where} \quad c_n=\frac{\beta_{n+1}(\alpha)}{\beta_{n}(\alpha)}a_{n+1}.$$

Hence, making use of Proposition \ref{scal} we obtain $$\displaystyle\Vert{\DS f}\Vert^2=\sum_{n=0}^\infty \frac{\beta_{n+1}(\alpha)}{\beta_n(\alpha)}\beta_{n+1}(\alpha)\vert{a_{n+1}}\vert^2.$$
Now, we use the fact that $$\displaystyle \frac{\beta_{n+1}(\alpha)}{\beta_n(\alpha)}=n+1+\frac{2\alpha+1}{2}(1+(-1)^n)$$ and setting $k=n+1$ we get
$$\displaystyle\Vert{\DS f}\Vert^2=\sum_{k=0}^\infty \left(k+\frac{2\alpha+1}{2}(1-(-1)^k)\right)\beta_k(\alpha)\vert{a_k}\vert^2$$
This leads to $$\displaystyle\Vert{\DS f}\Vert^2=\Vert{\MS f}\Vert^2-\Vert{f}\Vert^2-(2\alpha+1)\sum_{k=0}^\infty(-1)^k\beta_k(\alpha)\vert{a_k}\vert^2$$
 Last equality concludes the proof and shows that $\MS$ and $\DS$ have the same domain on $\CFH$.
\end{proof}
\begin{prop}
For $f\in D(\DS)$ and $g\in D(\MS),$ we have $$\scal{\DS f,g}_{\CFH}=\scal{f,\MS  g}_{\CFH}$$
and $$\scal{\MS g,f}_{\CFH}=\scal{g,\DS f}_{\CFH}.$$
\end{prop}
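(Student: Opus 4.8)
The plan is to reduce both identities to the coefficient formula of Proposition \ref{scal} together with the explicit expansions of $\MS g$ and $\DS f$ already obtained in the previous proposition. First I would fix $f(q)=\sum_{n=0}^\infty q^na_n\in D(\DS)$ and $g(q)=\sum_{n=0}^\infty q^nb_n\in D(\MS)$, and recall that
$$\MS g(q)=\sum_{n=0}^\infty q^{n+1}b_n\quad\text{and}\quad \DS f(q)=\sum_{n=0}^\infty q^nc_n,\qquad c_n=\frac{\beta_{n+1}(\alpha)}{\beta_n(\alpha)}a_{n+1}.$$
Since $f\in D(\DS)$ forces $\sum_n\vert c_n\vert^2\beta_n(\alpha)<\infty$ and $g\in\CFH$ gives $\sum_n\vert b_n\vert^2\beta_n(\alpha)<\infty$, the Cauchy--Schwarz inequality guarantees that the scalar series below converge absolutely, so every rearrangement is legitimate.

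Next I would apply Proposition \ref{scal} directly. For the first identity the coefficients of $\DS f$ are the $c_n$ and those of $g$ are the $b_n$, so
$$\scal{\DS f,g}_{\CFH}=\sum_{n=0}^\infty\overline{b_n}\,c_n\,\beta_n(\alpha)=\sum_{n=0}^\infty\overline{b_n}\,a_{n+1}\,\beta_{n+1}(\alpha),$$
where the real factor $\beta_{n+1}(\alpha)/\beta_n(\alpha)$ coming from $c_n$ cancels the $\beta_n(\alpha)$. On the other side, $\MS g$ has coefficient $b_{n-1}$ in front of $q^n$ for $n\geq1$ and vanishing constant term, so Proposition \ref{scal} yields
$$\scal{f,\MS g}_{\CFH}=\sum_{n=1}^\infty\overline{b_{n-1}}\,a_n\,\beta_n(\alpha)=\sum_{m=0}^\infty\overline{b_m}\,a_{m+1}\,\beta_{m+1}(\alpha)$$
after the shift $m=n-1$. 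The two sums coincide, which is the first equality.

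Finally, the second identity should follow for free from the Hermitian symmetry of the quaternionic scalar product. Indeed, from the series formula in Proposition \ref{scal} together with $\overline{pq}=\overline{q}\,\overline{p}$ one checks that $\scal{u,v}_{\CFH}=\overline{\scal{v,u}_{\CFH}}$; conjugating the first identity then gives $\scal{g,\DS f}_{\CFH}=\scal{\MS g,f}_{\CFH}$, as required. The only points demanding care are purely of bookkeeping: keeping the index shift in $\MS g$ aligned with the coefficients $c_n$ of $\DS f$, and observing that the ratios $\beta_{n+1}(\alpha)/\beta_n(\alpha)$ are real, so that they commute past the quaternionic coefficients and survive conjugation unchanged. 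This is where the main (though modest) obstacle lies; once the indices are matched, the equality is immediate.
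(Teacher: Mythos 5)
Your proposal is correct and follows essentially the same route as the paper: expand $f$ and $g$ in power series, use the explicit coefficient expressions for $\DS f$ and $\MS g$, apply Proposition \ref{scal} to identify both sides with $\sum_{n=0}^\infty \beta_{n+1}(\alpha)\overline{b_n}a_{n+1}$, and obtain the second identity from the Hermitian symmetry $\overline{\scal{h,l}}=\scal{l,h}$. Your added remarks on absolute convergence and on the reality of the ratios $\beta_{n+1}(\alpha)/\beta_n(\alpha)$ are sound refinements of details the paper leaves implicit.
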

\begin{proof}
Take $\displaystyle f(q)=\sum_{n=0}^\infty q^na_n$ and $\displaystyle g(q)=\sum_{n=0}^\infty q^nb_n$. Then, as we have seen before we have $$ \DS f(q)= \sum_{n=0}^\infty q^nc_n\quad \text{with} \quad c_n=\frac{\beta_{n+1}(\alpha)}{\beta_{n}(\alpha)}a_{n+1}$$ and by taking $b_{-1}=0$ we have $$\displaystyle\MS g(q)=\sum_{n=0}^\infty q^nb_{n-1}$$
Therefore, it follows from the Proposition \ref{scal} that
\begin{align*}\displaystyle
\scal{\DS f,g}_{\CFH}&= \sum_{n=0}^\infty \beta_{n+1}(\alpha)\overline{b_n}a_{n+1}
 \\& =\scal{f,\MS g}_{\CFH}.
 \\&
 \end{align*}
Then, we just need to apply $\overline{\scal{h,l}}=\scal{l,h}$ to get the second formula.
\end{proof}

\begin{prop}
The commutator of the operators $\DS$ and $\MS$ satisfies $$\left[\DS;\MS\right]=\Io+(2\alpha+1)A $$

where $\Io$ is the identity operator and $Af(q)=f(-q)$ on $\CFH$.
\end{prop}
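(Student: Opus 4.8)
The plan is to reduce everything to the monomial basis, on which both operators act as a single-step shift with an explicit scalar, and then to compare the $q^n$-coefficients of the two orders of composition. Since $D(\MS)=D(\DS)$ by the previous proposition, it suffices to verify the identity on a generic $f\in D(\DS)$ written as a power series $f(q)=\sum_{n=0}^\infty q^na_n$, using right linearity of both operators.

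First I would record the action on monomials. From $\MS f(q)=qf(q)$ we get $\MS(q^n)=q^{n+1}$, and from the coefficient computation already carried out for $\DS$ we have $\DS(q^n)=\frac{\beta_n(\alpha)}{\beta_{n-1}(\alpha)}q^{n-1}$ for $n\geq 1$, together with $\DS(1)=0$. Applying these to the series and keeping the quaternionic coefficients on the right yields
\[
\DS\MS f(q)=\sum_{n=0}^\infty \frac{\beta_{n+1}(\alpha)}{\beta_n(\alpha)}\,q^n a_n,\qquad \MS\DS f(q)=\sum_{n=1}^\infty \frac{\beta_n(\alpha)}{\beta_{n-1}(\alpha)}\,q^n a_n,
\]
so that the $q^n$-coefficient of $[\DS;\MS]f$ equals $\left(\frac{\beta_{n+1}(\alpha)}{\beta_n(\alpha)}-\frac{\beta_n(\alpha)}{\beta_{n-1}(\alpha)}\right)a_n$ for $n\geq 1$, and equals $\frac{\beta_1(\alpha)}{\beta_0(\alpha)}\,a_0$ for $n=0$.

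The crux is the elementary identity $\frac{\beta_{n+1}(\alpha)}{\beta_n(\alpha)}-\frac{\beta_n(\alpha)}{\beta_{n-1}(\alpha)}=1+(2\alpha+1)(-1)^n$, which follows at once from the closed form $\frac{\beta_{n+1}(\alpha)}{\beta_n(\alpha)}=n+1+\frac{2\alpha+1}{2}(1+(-1)^n)$ invoked earlier: subtracting the two expressions telescopes the integer parts to $1$ and collapses the parity terms to $(2\alpha+1)(-1)^n$. Since $\frac{\beta_1(\alpha)}{\beta_0(\alpha)}=2\alpha+2$ is exactly the value of $1+(2\alpha+1)(-1)^n$ at $n=0$, the boundary term produced by $\DS(1)=0$ fits the general pattern and no separate case survives. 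Reassembling gives $[\DS;\MS]f(q)=\sum_{n=0}^\infty\bigl(1+(2\alpha+1)(-1)^n\bigr)q^n a_n=f(q)+(2\alpha+1)\sum_{n=0}^\infty(-q)^n a_n$, and recognizing $\sum_{n=0}^\infty(-q)^n a_n=f(-q)=Af(q)$ yields $[\DS;\MS]=\Io+(2\alpha+1)A$ on the common domain.

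I expect the only delicate points to be the $n=0$ endpoint, where $\DS$ annihilates the constant and one must check consistency with the general coefficient formula, and the verification of the ratio-difference identity; both are light once the closed form for $\beta_{n+1}(\alpha)/\beta_n(\alpha)$ is used. Everything else is routine bookkeeping of right coefficients, and the identification of the alternating sum with $A$ is immediate from $A(q^n)=(-1)^nq^n$.
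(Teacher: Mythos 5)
Your proof is correct, but it follows a genuinely different route from the paper's. The paper argues pointwise and coordinate-free: from the definition it computes $\MS\DS f(q)=q\partial_S f(q)+(2\alpha+1)\frac{f(q)-f(-q)}{2}$, and then $\DS\MS f(q)=f(q)+q\partial_S f(q)+(2\alpha+1)\frac{f(q)+f(-q)}{2}$, where the Leibniz rule for $\partial_S$ is legitimate because $q\mapsto q$ is quaternionic intrinsic, and the odd part of $qf(q)$ is $qf^e(q)$. Subtracting, the $q\partial_S f$ terms cancel and the even/odd projections recombine into $f(q)+(2\alpha+1)f(-q)$, so the reflection $A$ appears structurally as the difference of the two parity projections. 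You instead work on the monomial basis, using $\DS(q^n)=\frac{\beta_n(\alpha)}{\beta_{n-1}(\alpha)}q^{n-1}$, $\DS(1)=0$, $\MS(q^n)=q^{n+1}$, and the closed form $\frac{\beta_{n+1}(\alpha)}{\beta_n(\alpha)}=n+1+\frac{2\alpha+1}{2}\left(1+(-1)^n\right)$ already invoked in the domain-equality proposition; the difference of consecutive ratios correctly collapses to $1+(2\alpha+1)(-1)^n$, and your check that the $n=0$ boundary value $\frac{\beta_1(\alpha)}{\beta_0(\alpha)}=2\alpha+2$ fits the same formula is exactly right. What the paper's approach buys: no series manipulation at all, and a transparent explanation of why the operator $A$ arises. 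What yours buys: it recycles the $\beta_n$ arithmetic already established, and it makes the spectral picture explicit --- the monomials $q^n$ are eigenvectors of $\left[\DS;\MS\right]$ with eigenvalues $1+(2\alpha+1)(-1)^n$ --- while avoiding any appeal to the Leibniz rule for slice regular functions. The one tacit point in your argument, applying $\DS$ and $\MS$ term by term to an infinite power series, is justified by term-by-term slice differentiation of convergent series, which is precisely the footing on which the paper's own coefficient computations in the preceding proposition rest.
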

\begin{proof}
Let $f\in\CFH$, then we have $$\displaystyle\MS f(q)=qf(q)\quad \text{and} \quad \DS f(q)=\partial_Sf(q)+(2\alpha+1)q^{-1}\left(\frac{f(q)-f(-q)}{2}\right).$$
Thus, $$\displaystyle\MS\DS f(q)=q\partial_S f(q)+(2\alpha+1)\left(\frac{f(q)-f(-q)}{2}\right)$$ Moreover, since the identity is an intrinsic entire slice regular function then the slice derivative satisfies the Leibniz formula so that we have $$\displaystyle\DS\MS f(q)=f(q)+q\partial_S f(q)+(2\alpha+1)\left(\frac{f(q)+f(-q)}{2}\right).$$
Hence, by substituting the two last equations we get the desired result.
\end{proof}
Finally, all the previous properties could be summarized in the following main result
\begin{thm}
$\MS$ and $\DS$ are closed densely defined right quaternionic linear operators adjoints of each other on the slice Cholewinski-Fock space. Moreover, they satisfy the commutation rule
$$\left[\DS;\MS\right]= \Io+(2\alpha+1)A.$$
\end{thm}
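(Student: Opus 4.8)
The plan is to assemble the statement directly from the four propositions proved earlier in this section, since each assertion has essentially already been verified in isolation. First I would invoke the proposition establishing that $\MS$ and $\DS$ are closed operators on $\CFH$, which settles the closedness. For the dense definition, I would recall that $\lbrace \phi_n^\alpha \rbrace_{n\in\N}$ is an orthonormal basis of $\CFH$ and note that each $\phi_n^\alpha(q) = q^n/\sqrt{\beta_n(\alpha)}$ lies in the common domain: indeed $\MS \phi_n^\alpha$ and $\DS \phi_n^\alpha$ are polynomials and hence belong to $\CFH$, so $D(\MS) = D(\DS)$ contains the dense linear span of the basis. Right quaternionic linearity is immediate from the definitions of the two operators.

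The substantive point is the adjointness. Here I would use the proposition giving $\scal{\DS f, g}_{\CFH} = \scal{f, \MS g}_{\CFH}$ for $f \in D(\DS)$ and $g \in D(\MS)$, together with its companion identity $\scal{\MS g, f}_{\CFH} = \scal{g, \DS f}_{\CFH}$. These show at once that $\MS \subseteq (\DS)^*$ and $\DS \subseteq (\MS)^*$, i.e. each operator is contained in the adjoint of the other. To upgrade the inclusions to equalities I would compute both sides on the orthonormal basis: expanding $f = \sum_n \phi_n^\alpha a_n$ and evaluating $\scal{\DS f, \phi_k^\alpha}_{\CFH}$ and $\scal{f, \MS \phi_k^\alpha}_{\CFH}$ term-by-term through Proposition \ref{scal}, one sees the two sesquilinear forms agree throughout the domain; since $D(\MS) = D(\DS)$ is already pinned down by the coefficient growth condition of Proposition \ref{scal}, no proper extension of either operator is admissible, and this forces $(\DS)^* = \MS$ and $(\MS)^* = \DS$ exactly.

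Finally, the commutation rule $[\DS;\MS] = \Io + (2\alpha+1)A$ is precisely the content of the preceding proposition, where the Leibniz rule for $\partial_S$ against the quaternionic intrinsic identity function was used to compute $\MS \DS f$ and $\DS \MS f$ and subtract. Collecting these facts completes the argument. The step I expect to require the most care is the passage from the formal adjoint inclusions to genuine equality of adjoint operators in the quaternionic Hilbert-space setting; the reassurance is that $\CFH$ carries a bona fide orthonormal basis and every computation reduces to the scalar coefficient identities of Proposition \ref{scal}, so the adjoint domains are governed entirely by the same growth condition and the complex-case reasoning transfers without change.
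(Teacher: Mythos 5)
Your proposal is correct and follows essentially the same route as the paper: the paper attaches no separate proof to this theorem at all, prefacing it with ``all the previous properties could be summarized in the following main result,'' so the intended argument is precisely your assembly of the closedness proposition, the density remark about $\lbrace\phi_n^\alpha\rbrace_n$, the adjoint-relation proposition, and the commutator proposition. Where you go beyond the paper is in the adjointness step: the paper's proposition only yields the formal identities $\scal{\DS f,g}_{\CFH}=\scal{f,\MS g}_{\CFH}$, which give the inclusions $\MS\subseteq(\DS)^*$ and $\DS\subseteq(\MS)^*$, and the paper never addresses upgrading these to equalities. Your mechanism for closing that gap is the right one: testing $g\in D((\DS)^*)$ against the basis elements $\phi_k^\alpha$ forces the coefficients of $(\DS)^*g$ to be those of the formal series $\MS g$, and then the requirement $(\DS)^*g\in\CFH$ is exactly the coefficient growth condition of Proposition \ref{scal} that defines $D(\MS)$, so $(\DS)^*=\MS$ (and symmetrically $(\MS)^*=\DS$). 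The phrase ``no proper extension of either operator is admissible'' is looser than the computation it stands for, but the computation itself is sound and, if written out, would make the theorem's claim of mutual adjointness fully rigorous rather than merely formal -- a point on which your write-up is actually more careful than the source.
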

\begin{rem}
If $\displaystyle\alpha=-\frac{1}{2}$ the last theorem states that the slice derivative $\partial_S$ and the quaternionic multiplication operator $M_q$ are adjoints one of each other and satisfy the classical commutation rule $\left[\partial_S;M_q\right]=\Io$ on the slice hyperholomorphic Fock space introduced in \cite{AlpayColomboSabadini2014}.
\end{rem}
\section{The slice monogenic Cholewinski-Fock spaces}
Let $\lbrace{e_1,e_2, . . . , e_n}\rbrace$ be an orthonormal basis of the Euclidean vector space $\R^n$ with a non-commutative product defined by the following multiplication law
$$e_ke_s + e_se_k = -2\delta_{k,s};\quad k, s= 1, . . . , n$$
where $\delta_{k,s}$ is the Kronecker symbol. The set $\lbrace{e_A : A \subset\lbrace{1, . . . , m}\rbrace}\rbrace$ with
$e_A = e_{h1}e_{h2}...e_{hr}, 1 \leq h1 < ...< h_r \leq n, e_{\emptyset} = 1$
forms a basis of the Clifford algebra $\R_n$. Let $\R^{n+1}$ be embedded in $\R_n$ by identifying
$(x_0, x_1,..., x_n) \in \R^{n+1}$ with the para-vector $x=x_0+\underset{-}x\in \R_n$. The conjugate of $x$ is given by $\bar{x} = x_0-\underset{-}x$
and the norm of $x$ is defined by $\vert{x}\vert^2=x_0^2+...+x_n^2$. Furthermore, the $(n-1)$ dimensional sphere of units $1-$vectors in $\R^n$ is denoted by $$\mathbb{S}^{n-1}=\lbrace{x=x_1e_1+...+x_ne_n ; x_1^2+...x_n^2=1}\rbrace.$$
Note that if $I\in\Sq^{n-1}$, then $I^2=-1$. Based on these notations, in \cite{CSS2009} the theory of slice regular functions on quaternions was extended to the slice monogenic setting thanks to the following :
\begin{defn}
A real differentiable function $f: \Omega\subset\R^{n+1} \longrightarrow \R_n$ on a given open set is said to be a slice (left) monogenic function if, for very $I\in \Sq^n$, the restriction $f_I$ to the slice $\C_{I}$, with variable $x=u+Iv$, satisfies the following equation on $\Omega_I=\Omega\cap\C_I$

$$
\overline{\partial_I} f(u+Iv):=
\dfrac{1}{2}\left(\frac{\partial }{\partial u}+I\frac{\partial }{\partial v}\right)f_I(u+vI)=0.
$$
The space of all slice monogenic functions on $\Omega$ is denoted by $\mathcal{SM}(\Omega)$.
\end{defn} By analogy with the quaternionic setting, to $f\in \mathcal{SM}(\R^{n+1})$ such that $$f(x)=f^e(x)+f^o(x)$$
we consider $$\Vert{f}\Vert^2_{\alpha,n}:=\displaystyle \int_{\C_I}\vert{f^e_I(x)}\vert^2 d \lambda_{\alpha,I}(x)+2(\alpha+1)\int_{\C_I}\vert{f^o_I(x)}\vert^2\vert{x}\vert^{-2}d\lambda_{\alpha+1,I}(x)$$
where for the para-vector $x=u+vI\in\C_I$ we have $$\displaystyle d\lambda_{\alpha,I}(x):=\frac{\vert{x}\vert^{2\alpha+2}}{\pi 2^{\alpha}\Gamma(\alpha+1)}K_\alpha(\vert{x}\vert^2)d\lambda_I(x).$$
Hence, we define the slice monogenic Cholewinski-Fock space on $\R^{n+1}$ to be $$\mathcal{F}_{Slice}^\alpha(\R^{n+1}):=\lbrace{f\in\mathcal{SM}(\R^{n+1}); \Vert{f}\Vert_{n,\alpha}<\infty}\rbrace.$$
The monomials given by paravectors $({x^n})_n$ form an orthogonal basis of $\mathcal{F}_{Slice}^\alpha(\R^{n+1})$. Moreover, note that if $\displaystyle\alpha=-\frac{1}{2}$ this space is the slice  
hyperholomorhic Clifford Fock space introduced in section 5 of \cite{AlpayColomboSabadini2014}.\\ \\
Finally, we conclude this paper by the following
\begin{rem}
 The theory of slice monogenic functions with Clifford valued functions \cite{ColomboSabadiniStruppa2011,CSS2009} extends following the same spirit the one of slice regular functions on quaternions so that we have the same extended versions of : Splitting Lemma, series expansion theorem, Representation Formula, etc. Hence, most of the results obtained in this paper in the quaternionic setting could be rewritten in the slice monogenic setting.
\end{rem}
\noindent{\bf Acknowledgements} \\ \\
I would like to thank Prof. Irene Sabadini for reading an earlier version of this paper and for her interesting comments and suggestions on the topic. Special thanks to Prof. Daniel Alpay and Prof. Fabrizio Colombo for useful discussions.  Thanks to the anonymous referee for their important remarks that helped to improve the paper. The author acknowleges the support of the project “INdAM Doctoral Programme in Mathematics and/or Applications Cofunded by Marie Sklodowska-Curie Actions”, acronym: INdAM-DP-COFUND-2015, grant number: 713485.

\end{document}